\def\BibTeX{{\rm B\kern-.05em{\sc i\kern-.025em b}\kern-.08em
		T\kern-.1667em\lower.7ex\hbox{E}\kern-.125emX}}
\newtheorem{thm}{Theorem}
\newtheorem{lem}[thm]{Lemma}
\newtheorem{prob}[thm]{Problem}
\newtheorem{cor}[thm]{Corollary}
\newcommand{\guillemets}[1]{``#1''}
\newcommand{\set}[1]{\left\{#1\right\}}
\newcommand{\case}[1]{\emph{\underline{#1}:}}
\newcommand{\myvect}[1]{\bm{#1}}
\newcommand{\myrandvect}[1]{\underline{\bm{#1}}}
\newcommand{\mymatrix}[1]{\bm{#1}}
\newcommand{\SCI}{{\mathrm{SCI}}}
\newcommand{\R}{\mathbb{R}} 
\newcommand{\E}{\mathrm{E}} 
\newcommand{\rA}{\mathcal{A}}
\newcommand{\rC}{\mathcal{C}}
\newcommand{\rE}{\mathcal{E}}
\newcommand{\rV}{\mathcal{V}}
\newcommand{\norm}[1]{\left\lVert#1\right\rVert}
\newcommand{\suchthat}{\ | \ }
\newcommand{\ie}{{i.e.},}
\newcommand{\eg}{{e.g.},}
\DeclareMathOperator*{\trace}{tr}
\DeclareMathOperator*{\minimize}{minimize}
\DeclareMathOperator*{\subject}{subject\, to:}
\providecommand{\keywords}[1]
{
	\small	
	\textbf{\textit{Keywords---}} #1
}
\begin{document}

\title{Optimality of Split Covariance Intersection Fusion}
\author{Colin Cros, Pierre-Olivier Amblard, Christophe Prieur
	,\\
	Jean-François Da Rocha%
		\thanks{C. Cros, P.-O. Amblard and C. Prieur are with the CNRS, Univ. Grenoble Alpes, GIPSA-lab, F-38000 Grenoble, Auvergne-Rhône-Alpes, France.
				{\tt\small colin.cros@gipsa-lab.fr, christophe.prieur@gipsa-lab.fr, 	pierre-olivier.amblard@cnrs.fr}.}
		\thanks{C. Cros and J.-F. Da Rocha are with Telespazio FRANCE, F-31100 Toulouse, Occitanie, France. {\tt\small jeanfrancois.darocha@telespazio.com}.}
	}

\maketitle

\begin{abstract}
	Linear fusion is a cornerstone of estimation theory. Optimal linear fusion was derived by Bar-Shalom and Campo in the 1980s. It requires knowledge of the cross-covariances between the errors of the estimators. In distributed or cooperative systems, these cross-covariances are difficult to compute. To avoid an underestimation of the errors when these cross-covariances are unknown, conservative fusions must be performed. A conservative fusion provides a fused estimator with a covariance bound which is guaranteed to be larger than the true (but not computable) covariance of the error. Previous research by Reinhardt \textit{et al.} proved that, if no additional assumption is made about the errors of the estimators, the minimal bound for fusing two estimators is given by a fusion called Covariance Intersection (CI). In practice, the errors of the estimators often have an uncorrelated component, because the dynamic or measurement noise is assumed to be independent. In this context, CI is no longer the optimal method and an adaptation called Split Covariance Intersection (SCI) has been designed to take advantage from these uncorrelated components. The contribution of this paper is to prove that SCI is the optimal fusion rule for two estimators under the assumption that they have an uncorrelated component. It is proved that SCI provides the optimal covariance bound with respect to any increasing cost function. To prove the result, a minimal volume that should contain all conservative bounds is derived, and the SCI bounds are proved to be the only bounds that tightly circumscribe this minimal volume.
\end{abstract}

\keywords{
	Linear Estimation, Conservative Fusion, Split Covariance Intersection
}

\section{Introduction}

Fusion is one of the fundamental elements in estimation theory. It is the process of combining different pieces of information into a more accurate one. The problem of optimal fusion have been studied for decades \cite{bar1986effect}, and especially when the fused estimator is searched for as a linear combination of the estimators. The optimal linear fusion of two estimators was first proposed by Bar-Shalom and Campo \cite{bar1986effect}. They underlined the importance of the cross-covariance term. Since then, extensions for the fusion of any number of estimators have been derived, see \eg{} \cite{li2003optimal}. To perform the optimal linear fusion, the full knowledge of the second order moments of the estimators are required. In particular, the knowledge of the covariances of each estimator is not enough, but the knowledge of the cross-covariances between each pair of estimators is also necessary. This requirement may be prohibitive for some applications such as networked estimation or cooperative estimation.

Several strategies have been proposed to perform the fusion when the cross-covariances are unknown and cannot be computed. The most simple is to assume the estimators uncorrelated, then to apply the optimal scheme proposed by Bar-Shalom and Campo. However, this naive strategy may result in an underestimation of the error of estimation, see \eg{} \cite{arambel2001covariance}, and should therefore be avoided. Furthermore, as \cite{uhlmann2003covariance} points out, considering any particular possible cross-covariance to apply the optimal scheme results similarly to an underestimation of the error. As a consequence, the whole set of \emph{admissible} cross-covariances should be considered when designing the fusion. In this case, the covariance of the error of the resulting fused estimator cannot be computed as each cross-covariance would produce a different fused covariance. Instead, a \emph{conservative} bound is searched for to ensure that the error of estimation is not underestimated. This bound should be greater than the covariance of the fused estimator for all admissible cross-covariances. The optimal linear fusion problem then consists in finding a fused estimator having the smallest conservative bound. This problem can be formulated as a general optimization problem \cite{forsling2022conservative}. To fuse two estimators without any additional information on their cross-covariance, Covariance Intersection (CI) \cite{julier1997nondivergent} is proved to give the optimal bound \cite{reinhardt2015minimum}. CI provides a bound whose associated precision matrix is a convex combination of the precision matrices of both estimators. This fusion is very conservative as it does not consider any assumption on the nature of the (unknown) correlation. Hence, it encompasses the extreme case of totally correlated estimators. The set of admissible cross-covariances is the largest set possible: it contains all cross-covariances generating a positive semi-definite centralized covariance. If this set can be reduced under additional assumptions, better bounds have been derived. For example, if the estimators to be fused are known to share a common estimate, Ellipsoidal Intersection \cite{sijs2010state} have been proposed, but it is not conservative. An alternative fusion scheme called Inverse Covariance Intersection is proved to provide, in this case, a conservative bound better than CI \cite{noack2017inverse}. Another classical example is when the estimators are known to have an uncorrelated component (generally independent). In this case, an extension of CI called Split Covariance Intersection (SCI) \cite{julier2001general} also provides a better result. SCI has been applied to a great variety of problems, \eg{} SLAM \cite{julier2007using}, cooperative localization \cite{li2013cooperative}, or cooperative perception \cite{lima2021data}. There are three main reasons for the success of SCI. Firstly, it is particularly adapted for cooperative and distributed systems as the dynamical noise and the measurement noise are often assumed independent from the errors of estimation. Second, it provides a conservative bound without knowing the cross-covariance but unlike CI, it takes benefit from the independent part and therefore generates a less conservative bound. Third, it is the natural extension of CI which is known to provide the minimal bound of a fusion and achieves empirically good performances. However, to the best of our knowledge, the optimality of the SCI fusion has not yet been demonstrated.

In this paper, we focus on the linear fusion of two estimators in the situation addressed by SCI. We assume that the estimators are the sum of two components: one \emph{correlated} and the other \emph{uncorrelated}. In this context, we prove that SCI is the optimal fusion scheme in the sense that it provides the minimal bound with respect to any increasing cost function. To prove this result, we introduce a minimal volume that all conservative bounds must contain. Then, we prove that the SCI bounds are the only bounds that tightly circumscribe this volume.

The rest of the paper is organized as follows. First, Section~\ref{sec: Background} proposes an overview of the Optimal Linear Fusion Problem and of different fusion schemes. Then, the problem of optimal fusion under split covariances and our main result, the optimality of SCI, are introduced in Section~\ref{sec: Problem}. To solve this problem, Section~\ref{sec: Minimal volume} introduces and characterizes the minimal volume that must contain all conservative bounds. Section~\ref{sec: Tightness} studies the tightness around this minimal volume. Section~\ref{sec: Proof} proves our main result. A discussion is proposed in Section~\ref{sec: Discussion}. Finally, Section~\ref{sec: Conclusion} gives some perspectives.

\bigbreak
\textbf{Notation.}
In the sequel, vectors are denoted in lowercase boldface letters \eg{} $\myvect{x} \in \R^n$, and matrices in uppercase boldface variables \eg{} $\mymatrix{M} \in \R^{n\times n}$. Random variables are underlined \eg{} $\myrandvect{x}$ for a random vector. The notation $\E[\cdot]$ denotes the expected value of a random variable and $\norm{\cdot}$ the Euclidean norm of a vector. The trace, the inverse and the transpose of a matrix $\mymatrix{M}$ and the identity matrix are denoted as $\trace \mymatrix{M}$, $\mymatrix{M}^{-1}$, $\mymatrix{M}^\intercal$ and $\mymatrix{I}$ respectively. For two matrices $\mymatrix{A}$ and $\mymatrix{B}$, the notations $\mymatrix{A} \preceq \mymatrix{B}$ and $\mymatrix{A} \prec \mymatrix{B}$ mean that the difference $\mymatrix{B} - \mymatrix{A}$ is positive semi-definite and positive definite respectively. For a positive semi-definite matrix $\mymatrix{A}$, $\mymatrix{A}^{1/2}$ denotes one of its square roots. A positive definite matrix $\mymatrix{P}$ is represented in the figures by the ellipsoid $\rE(\mymatrix{P}) := \set{\myvect{x} \suchthat \myvect{x}^\intercal \mymatrix{P}^{-1}\myvect{x} \le 1}$.

\section{Background: Optimal Linear fusion}\label{sec: Background}

Consider a random state $\myrandvect{x} \in \R^n$ and two unbiased estimators $\myrandvect{\hat x}_A$ and $\myrandvect{\hat x}_B$ of $\myrandvect{x}$. The errors of estimation are denoted as $\myrandvect{\tilde x}_A = \myrandvect{\hat x}_A - \myrandvect{x}$ and $\myrandvect{\tilde x}_B = \myrandvect{\hat x}_B - \myrandvect{x}$, and their covariances and cross-covariance are denoted as $\mymatrix{C}_A = \E[\myrandvect{\tilde x}_A \myrandvect{\tilde x}_A^\intercal]$, $\mymatrix{C}_B = \E[\myrandvect{\tilde x}_B \myrandvect{\tilde x}_B^\intercal]$, and $\mymatrix{C}_{AB} = \E[\myrandvect{\tilde x}_A \myrandvect{\tilde x}_B^\intercal]$. A linear fusion consists in creating a new unbiased estimator $\myrandvect{\hat x}_F$ ($F$ for \emph{fused}) as a linear combination of $\myrandvect{\hat x}_A$ and $\myrandvect{\hat x}_B$. It depends on two gains $\mymatrix{K}_A$ and $\mymatrix{K}_B$, and is defined as:
\begin{equation}\label{eq: Fused state}
	\myrandvect{\hat x}_F(\mymatrix{K}) = \mymatrix{K}_A \myrandvect{\hat x}_A + \mymatrix{K}_B \myrandvect{\hat x}_B,
\end{equation}
where $\mymatrix{K} = (\mymatrix{K}_A, \mymatrix{K}_B)$ for compactness. When there is no ambiguity on the gains, we simply denote $\myrandvect{\hat x}_F$. The unbiasedness of $\myrandvect{\hat x}_F(\mymatrix{K})$ imposes that the gains satisfy:
\begin{equation}\label{eq: Constraint unbiasedness}\tag{$C_1$}
	\mymatrix{K}_A + \mymatrix{K}_B = \mymatrix{I}.
\end{equation}
The error of the fused estimator is defined as $\myrandvect{\tilde x}_F = \myrandvect{\hat x}_F - \myrandvect{x} = \mymatrix{K}_A\myrandvect{\tilde x}_A + \mymatrix{K}_B\myrandvect{\tilde x}_B$, and its covariance matrix is:
\begin{multline}\label{eq: MSE original}
	\mymatrix{C}_F(\mymatrix{K}) = \mymatrix{K}_A \mymatrix{C}_A \mymatrix{K}_A^\intercal + \mymatrix{K}_A \mymatrix{C}_{AB} \mymatrix{K}_B^\intercal\\
	+ \mymatrix{K}_B \mymatrix{C}_{AB}^\intercal \mymatrix{K}_A^\intercal + \mymatrix{K}_B \mymatrix{C}_B \mymatrix{K}_B^\intercal.
\end{multline}
The covariance $\mymatrix{C}_F(\mymatrix{K})$ represents the mean squared error (MSE) matrix of the fused estimator. The objective of the optimal fusion is to minimize the error of estimation, \ie{} to minimize some cost function on $\mymatrix{C}_F(\mymatrix{K})$, \eg{} its trace or its determinant.
If the covariances $\mymatrix{C}_A$ and $\mymatrix{C}_B$ and the cross-covariance $\mymatrix{C}_{AB}$ are known, the optimal fusion is well-known. It was solved by Bar-Shalom and Campo \cite{bar1986effect} and is defined as:
\begin{subequations}
	\begin{align}
		\mymatrix{C}_F^* &= \mymatrix{C}_A - (\mymatrix{C}_A - \mymatrix{C}_{AB}) \mymatrix{R}^{-1} (\mymatrix{C}_A - \mymatrix{C}_{BA}),\\
		\mymatrix{K}_A^* &= (\mymatrix{C}_B - \mymatrix{C}_{AB}^\intercal)\mymatrix{R}^{-1},\\
		\mymatrix{K}_B^* &= (\mymatrix{C}_A - \mymatrix{C}_{AB})\mymatrix{R}^{-1},
	\end{align}
\end{subequations} 
where $\mymatrix{R} = \mymatrix{C}_A + \mymatrix{C}_B - \mymatrix{C}_{AB} - \mymatrix{C}_{AB}^\intercal$. In this case, $\mymatrix{C}_F^*$ is the minimum in the Loewner ordering sense over the set of possible covariance matrices. This means that for any other gain $\mymatrix{K}$ satisfying \eqref{eq: Constraint unbiasedness}, $\mymatrix{C}_F^* \preceq \mymatrix{C}_F(\mymatrix{K})$. As a consequence, $\mymatrix{C}_F^*$ is the optimum for all increasing cost functions. As a particular case, if the estimators are uncorrelated, \ie{} $\mymatrix{C}_{AB} = \mymatrix{0}$, then $\mymatrix{C}_F^* = (\mymatrix{C}_A^{-1} + \mymatrix{C}_B^{-1})^{-1}$. The precision matrix of the optimal fused estimator is in this case the sum of the precision matrices of the two estimators, which corresponds to the classical information form of the Kalman Filter, see \eg{} \cite{anderson1979optimal}.

If the covariances $\mymatrix{C}_A$ and $\mymatrix{C}_B$ are known but the cross-covariance $\mymatrix{C}_{AB}$ is unknown (and cannot be estimated), then the MSE matrix of the fused estimator should be seen as a function of $(i)$ the gains and $(ii)$ the cross-covariance: $\mymatrix{C}_F(\mymatrix{K}, \mymatrix{C}_{AB})$. The set of cross-covariances to consider in the fusion is called the set of \emph{admissible cross-covariances} and is denoted as $\rA$. Without any additional information, this set equals to:
\begin{equation}
	\bar\rA := \set{\mymatrix{C}_{AB} \suchthat \begin{bmatrix} \mymatrix{C}_A & \mymatrix{C}_{AB} \\ {\mymatrix{C}_{AB}}^\intercal & \mymatrix{C}_B \end{bmatrix} \succeq 0 }.
\end{equation}
The set $\bar\rA$ is the most general and largest set possible, it simply imposes that $\mymatrix{C}_{AB}$ is indeed a cross-covariance matrix. With additional assumptions on the cross-covariances $\mymatrix{C}_{AB}$, the set $\rA$ is generally smaller $\bar\rA$.

As each admissible $\mymatrix{C}_{AB} \in \rA$ induces a different MSE matrix $\mymatrix{C}_F$, the \emph{true} MSE matrix of the fused estimator cannot be computed (because the \emph{true} $\mymatrix{C}_{AB}$ is unknown). Therefore, a \emph{conservative} upper-bound is searched for to ensure that the error is not underestimated. A matrix $\mymatrix{B}_F$ is said to be a \emph{conservative upper-bound} for the fusion induced by the gains $\mymatrix{K}$, if $\mymatrix{C}_F(\mymatrix{K}, \mymatrix{C}_{AB}) \preceq \mymatrix{B}_F$ for all admissible $\mymatrix{C}_{AB} \in \rA$. The problem of optimal fusion consists in finding the fusion having the optimal bound with respect to some given increasing cost function $J$:
\begin{equation}\label{eq: Optimal fusion problem}
	\left\{\begin{array}{cll}
		\minimize\limits_{\mymatrix{K}, \mymatrix{B}_F} & J(\mymatrix{B}_F) \\
		\subject{} & \mymatrix{K}_A + \mymatrix{K}_B = \mymatrix{I}\\
		& \forall \mymatrix{C}_{AB} \in \rA,\, \mymatrix{B}_F \succeq \mymatrix{C}_F(\mymatrix{K}, \mymatrix{C}_{AB})
	\end{array}\right.
\end{equation}

\begin{figure}[H]
	\begin{subfigure}{\linewidth}
		\centering
		{\input{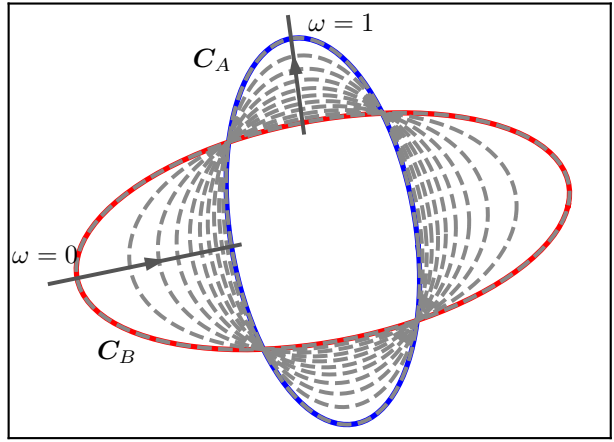}}
		\caption{CI bounds.}
		\label{sfig: all CI}
	\end{subfigure}\\
	\begin{subfigure}{\linewidth}
		\centering
		{\input{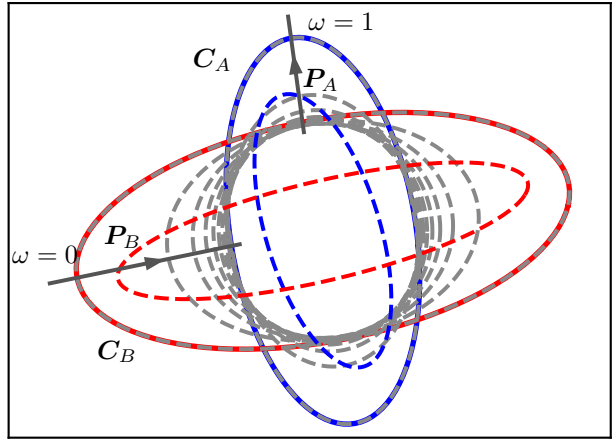}}
		\caption{SCI bounds.}
	\end{subfigure}
	\caption{Illustration of CI bounds and SCI bounds for different values of $\omega$. The bounds are represented by the grey dashed ellipses, the arrows indicate the direction $\omega$ increasing. The numerical values of the matrices are $\mymatrix{C}_A = [2,-1;-1,8]$, $\mymatrix{P}_A = [1,-1;-1,4]$, $\mymatrix{Q}_A = [1,0;0,4]$, $\mymatrix{C}_B = [13,2;2,3]$, $\mymatrix{P}_B = [9,2;2,1]$, $\mymatrix{Q}_B = [4,0;0,2]$. Unless otherwise stated, these matrices are used for all the figures. The values of $\omega$ displayed are $\omega_i = i/10$ for $i \in \set{0, \dots, 10}$.}
	\label{fig: all CI and SCI}
\end{figure}
This problem was proposed in a more general form in \cite{forsling2022conservative} to unify the formulations of different fusions. The number of variables of in \eqref{eq: Optimal fusion problem} is $O(n^2)$ where $n$ is the dimension of the state. Several candidate solutions have been proposed to solve \eqref{eq: Optimal fusion problem}, or to reduce the number of variables, for different sets $\rA$ and different cost functions $J$.
The first and most famous conservative bound, proposed by Uhlmann and Julier \cite{uhlmann1996general, julier1997nondivergent}, is CI. This fusion rule is conservative for the whole set $\bar\rA$. It is defined (in a information form) as:
\begin{subequations}\label{eq: CI equations}
	\begin{align}
		\mymatrix{B}_{\text{CI}}(\omega)^{-1}\myrandvect{\hat x}_{\text{CI}}(\omega) &= \omega \mymatrix{C}_A ^{-1} \myrandvect{\hat x}_A + (1-\omega) \mymatrix{C}_B^{-1} \myrandvect{\hat x}_B, \\
		\mymatrix{B}_{\text{CI}}^{-1}(\omega) &=\omega \mymatrix{C}_A^{-1} + (1-\omega) \mymatrix{C}_B^{-1},
	\end{align}
\end{subequations}
where $\omega$ is a parameter to choose in $[0,1]$. The precision matrix of the CI bound is a convex combination of the precision matrices of the estimators. The name Covariance Intersection comes from the fact that the ellipsoids associated with CI bounds circumscribe the intersection of the ellipsoids of the two covariances $\mymatrix{C}_A$ and $\mymatrix{C}_B$. The bounds generated by CI are illustrated in Figure~\ref{sfig: all CI}. As the parameter $\omega$ should be chosen to minimize the cost function $J$, CI still requires an optimization, but only on one variable (which can be performed efficiently). An important difference with the case where the cross-covariance $\mymatrix{C}_{AB}$ is known is that the optimal bound depends on the cost function $J$, for instance optimizing the trace or the determinant of the bound results generally in different optimal bounds. The CI fusion rule can be extended to any number of estimators by considering a convex combination of their precision matrices. The optimization of the weights becomes harder and several techniques have been developed to speed up the optimization, at a cost of suboptimality \cite{niehsen2002information, franken2005improved, wang2009fast, deng2012sequential}. The good performances of CI have first been described in \cite{chen2002estimation}: CI was proved to provide the optimal bound for the fusion of two estimators if the cost function is the trace and the whole set $\bar\rA$ is considered. More recently, this optimality was extended to any increasing cost function $J$ in \cite{reinhardt2015minimum}. Without any other assumption on the set of admissible cross-covariance matrices, any other bound is therefore either worst or non-conservative. As a consequence, CI have been applied to several estimation problems \cite{arambel2001covariance, julier2007using, guo2010covariance, lai2019cooperative}.

Considering the whole set $\bar\rA$ assumes that the errors $\myrandvect{\tilde x}_A$ and $\myrandvect{\tilde x}_B$ may be totally correlated. It is usually a very pessimistic assumption. In practice, this assumption can often be weaken and specific fusion rules have been derived for specific situations. In \cite{ajgl2019rectification}, a conservative fusion was proposed if only one component of the cross-covariance is unknown. No proof of optimality of the bound was however given. An extension to several components was then presented in \cite{ajgl2022covariance}. Another well-studied case is if the correlation between the errors $\myrandvect{\tilde x}_A$ and $\myrandvect{\tilde x}_B$ is known to be bounded. In \cite{wu2017covariance}, the authors considered a generalization of the Pearson's correlation coefficient: the largest eigenvalue of the correlation matrix: $\mymatrix{C}_A^{-1/2} \mymatrix{C}_{AB} \mymatrix{C}_B^{-1/2}$. They assumed that this correlation coefficient is bounded by some value $\rho \in [0,1]$. They proved that, in this case, the set admissible of cross-covariances $\bar\rA$ is reduced to:
\begin{equation}\label{eq: Bounded correlation set}
	\rA_{\rho} := \set{\mymatrix{C}_{AB} \suchthat \begin{bmatrix} \rho \mymatrix{C}_A & \mymatrix{C}_{AB} \\ {\mymatrix{C}_{AB}}^\intercal & \rho \mymatrix{C}_B \end{bmatrix} \succeq 0 }.
\end{equation}
The authors proposed a family of bounds for this set, and proved that the optimal bound is in the family when considering the trace as a cost function. They also proposed an algorithm to find the optimal bound within the family. In Section~\ref{sec: Discussion}, their family is proved to be in fact optimal for any increasing cost function as it is a particular case of SCI. A more general context is to assume that the estimators $\myrandvect{\hat x}_A$ and $\myrandvect{\hat x}_B$ are the sum of two components: one whose error may be correlated to an unknown degree (with the error of the other first component), and the second whose error is uncorrelated with everything else. In this situation, the covariances $\mymatrix{C}_A$ and $\mymatrix{C}_B$ are split into a correlated component, denoted as $\mymatrix{P}_A$ and $\mymatrix{P}_B$, and an uncorrelated component, denoted as $\mymatrix{Q}_A$ and $\mymatrix{Q}_B$: $\mymatrix{C}_A = \mymatrix{P}_A + \mymatrix{Q}_A$ and $\mymatrix{C}_B = \mymatrix{P}_B + \mymatrix{Q}_B$. The unknown cross-covariance $\mymatrix{C}_{AB}$ corresponds in this case to $\mymatrix{P}_{AB}$: the cross-covariance between the errors of the first components. Consequently, the set of admissible cross-covariances is reduced to:
\begin{equation}\label{eq: Split admissible set}
	\rA_{\text{Split}} := \set{\mymatrix{C}_{AB} \suchthat \begin{bmatrix} \mymatrix{P}_A & \mymatrix{C}_{AB} \\ {\mymatrix{C}_{AB}}^\intercal & \mymatrix{P}_B \end{bmatrix} \succeq 0 }.
\end{equation}
To exploit this particular situation, SCI was designed as an adaptation of CI \cite{julier2001general}. Its fusion rule is conservative for the set $\rA_{\text{Split}}$, and is also parameterized by an $\omega \in [0, 1]$ as follows:
\begin{subequations}\label{eq: SCI equations}
	\begin{multline}
		\mymatrix{B}_{\text{SCI}}(\omega)^{-1}\myrandvect{\hat x}_{\text{SCI}}(\omega) = \omega(\mymatrix{P}_A + \omega \mymatrix{Q}_A)^{-1} \myrandvect{\hat x}_A \\+ \bar\omega(\mymatrix{P}_B + \bar\omega \mymatrix{Q}_B)^{-1} \myrandvect{\hat x}_B,
	\end{multline}
	\begin{equation}
		\mymatrix{B}_{\text{SCI}}^{-1}(\omega) =\omega (\mymatrix{P}_A + \omega \mymatrix{Q}_A)^{-1} + \bar\omega (\mymatrix{P}_B + \bar\omega \mymatrix{Q}_B)^{-1},
	\end{equation}
\end{subequations}
where $\bar\omega = 1 - \omega$. SCI takes benefit from the uncorrelated components to reduce the bounds and provide a better bound than CI. A comparison between CI and SCI is proposed in Figure~\ref{fig: all CI and SCI}, it can be observed that SCI bounds are smaller. The splitting assumption of the covariances appears in a variety of contexts for the filtering of distributed and cooperative systems. For example, the integration of the SCI fusion into linear filtering is introduced in \cite{li2013split} in the context of cooperative localization. In that scenario, the independent components of the covariances come \eg{} from the dynamic noise of the agents which is assumed independent. SCI have also been applied to several other problems \cite{carrillo2013decentralized, li2013cooperative, pierre2018range, lima2021data}. Despite SCI's good performance, which makes it widely used in practice, and the fact that SCI is the natural extension of CI, SCI has not been proven to provide the optimal bound for the set $\rA_{\text{Split}}$. This is the goal of this paper.

\section{Problem formulation and main result}\label{sec: Problem}

\subsection{Optimal linear fusion with split covariances}
Consider again a random state $\myrandvect{x} \in \R^n$ and two unbiased estimators $\myrandvect{\hat x}_A$ and $\myrandvect{\hat x}_B$ of $\myrandvect{x}$. The errors of estimation are denoted as $\myrandvect{\tilde x}_A = \myrandvect{\hat x}_A - \myrandvect{x}$ and $\myrandvect{\tilde x}_B = \myrandvect{\hat x}_B - \myrandvect{x}$, and their covariances and cross-covariance are denoted as $\mymatrix{C}_A = \E[\myrandvect{\tilde x}_A \myrandvect{\tilde x}_A^\intercal]$, $\mymatrix{C}_B = \E[\myrandvect{\tilde x}_B \myrandvect{\tilde x}_B^\intercal]$, and $\mymatrix{C}_{AB} = \E[\myrandvect{\tilde x}_A \myrandvect{\tilde x}_B^\intercal]$. The two estimators are assumed to be the sum of two components:
\begin{align}
	\myrandvect{\hat x}_A &= \myrandvect{\hat x}_{A,1} + \myrandvect{\hat x}_{A,2}, &
	\myrandvect{\hat x}_B &= \myrandvect{\hat x}_{B,1} + \myrandvect{\hat x}_{B,2}.
\end{align}
The errors of the first components $\myrandvect{\tilde x}_{A,1}$ and $\myrandvect{\tilde x}_{B,1}$ are assumed to be correlated to an unknown degree, while the errors of the second components are assumed to be uncorrelated with each other and with the errors of the first components:
\begin{subequations}
	\begin{align}
		\E[\myrandvect{\tilde x}_{A,2}\myrandvect{\tilde x}_{B,2}^\intercal] &= \mymatrix{0}, \\
		\E[\myrandvect{\tilde x}_{A,2}\myrandvect{\tilde x}_{A,1}^\intercal] &= \E[\myrandvect{\tilde x}_{A,2}\myrandvect{\tilde x}_{B,1}^\intercal] = \mymatrix{0}, \\
		\E[\myrandvect{\tilde x}_{B,2}\myrandvect{\tilde x}_{A,1}^\intercal] &= \E[\myrandvect{\tilde x}_{B,2}\myrandvect{\tilde x}_{B,1}^\intercal] = \mymatrix{0}.
	\end{align}
\end{subequations}
The covariances of the errors of the first components are known and denoted as $\mymatrix{P}_A = \E[\myrandvect{\tilde x}_{A,1} \myrandvect{\tilde x}_{A,1}^\intercal]$ and $\mymatrix{P}_B = \E[\myrandvect{\tilde x}_{B,1} \myrandvect{\tilde x}_{B,1}^\intercal]$. Their cross-covariance is unknown and denoted as $\mymatrix{P}_{AB} = \E[\myrandvect{\tilde x}_{A,1} \myrandvect{\tilde x}_{B,1}^\intercal]$. The covariances of the errors of the second components are known and denoted as $\mymatrix{Q}_A = \E[\myrandvect{\tilde x}_{A,2} \myrandvect{\tilde x}_{A,2}^\intercal]$ and $\mymatrix{Q}_B = \E[\myrandvect{\tilde x}_{B,2} \myrandvect{\tilde x}_{B,2}^\intercal]$. Thus, the covariances of the errors of estimation satisfy:
\begin{subequations}
	\begin{align}
		\mymatrix{C}_A &= \mymatrix{P}_A + \mymatrix{Q}_A, &
		\mymatrix{C}_B &= \mymatrix{P}_B + \mymatrix{Q}_B, \\
		\mymatrix{C}_{AB} &= \mymatrix{P}_{AB}.
	\end{align}
\end{subequations}
The unknown cross-covariance $\mymatrix{P}_{AB}$ corresponds to the cross-covariance between $\myrandvect{\tilde x}_{A,1}$ and $\myrandvect{\tilde x}_{B,1}$. It should therefore satisfy:
\begin{equation*}
	\begin{bmatrix}
		\mymatrix{P}_A & \mymatrix{P}_{AB} \\
		\mymatrix{P}_{AB}^\intercal & \mymatrix{P}_B
	\end{bmatrix} \succeq \mymatrix{0}.
\end{equation*}
As $\mymatrix{C}_{AB} = \mymatrix{P}_{AB}$, the set of admissible cross-covariances is $\rA_\text{Split}$ introduced in \eqref{eq: Split admissible set}. The MSE of the fused estimator $\myrandvect{\hat x}_F(\mymatrix{K})$ given in \eqref{eq: MSE original} becomes:
\begin{multline}\label{eq: MSE}
	\mymatrix{C}_F(\mymatrix{K}, \mymatrix{P}_{AB}) = \mymatrix{K}_A (\mymatrix{P}_A + \mymatrix{Q}_A) \mymatrix{K}_A^\intercal + \mymatrix{K}_A \mymatrix{P}_{AB} \mymatrix{K}_B^\intercal\\
	+ \mymatrix{K}_B \mymatrix{P}_{AB}^\intercal \mymatrix{K}_A^\intercal + \mymatrix{K}_B (\mymatrix{P}_B + \mymatrix{Q}_B) \mymatrix{K}_B^\intercal.
\end{multline}
In \eqref{eq: MSE}, $\mymatrix{C}_{AB}$ has been replaced by $\mymatrix{P}_{AB}$ to emphasize that it corresponds to the cross-covariance between $\myrandvect{\tilde x}_{A,1}$ and $\myrandvect{\tilde x}_{B,1}$.
In this context, a pair $\left(\mymatrix{K}, \mymatrix{B}_F\right)$ is said to define a \emph{conservative fusion} if $\mymatrix{B}_F \succeq \mymatrix{C}_F(\mymatrix{K}, \mymatrix{P}_{AB})$ for all $\mymatrix{P}_{AB} \in \rA_{\text{Split}}$. Figure~\ref{fig: Conservative bound} presents an example of conservative fusion. In that figure, the conservativeness of the bound has been illustrated by computing the MSE $\mymatrix{C}_F(\mymatrix{K}, \mymatrix{P}_{AB})$ for particular values of $\mymatrix{P}_{AB}$ and by representing the set:
$$\rV(\mymatrix{K}) = \bigcup_{\mymatrix{P}_{AB} \in \rA_{\text{Split}}} \rE(\mymatrix{C}_F(\mymatrix{K, \mymatrix{P}_{AB}})).$$ 
The condition \guillemets{$\forall \mymatrix{P}_{AB} \in \rA_\text{Split}$, $\mymatrix{C}_F(\mymatrix{K, \mymatrix{P}_{AB}}) \preceq \mymatrix{B}_F$} is geometrically equivalent to \guillemets{$\rV(\mymatrix{K}) \subseteq \rE(\mymatrix{B}_F)$}. Intuitively, the bound $\mymatrix{B}_F$ presented on Figure~\ref{fig: Conservative bound} as a thick black line is not optimal as smaller ones exist. Throughout this paper, the covariance matrices are compared with respect to an increasing cost function $J$, increasing in the sense of the Loewner ordering \ie{} $\mymatrix{P} \preceq \mymatrix{Q} \implies J(\mymatrix{P}) \le J(\mymatrix{Q})$ (and $\mymatrix{P} \preceq \mymatrix{Q}$ with $\mymatrix{P} \ne \mymatrix{Q} \implies J(\mymatrix{P}) < J(\mymatrix{Q})$). The focus of this paper is to find the optimal linear conservative fusion of $\myrandvect{\hat x}_A$ and $\myrandvect{\hat x}_B$ for the set $\rA_{\text{Split}}$. This means solving the following problem.
\begin{prob}[Optimal Fusion with Split Covariances]\label{pro: Main problem}
	\begin{equation*}
		\left\{\begin{array}{cl}
			\minimize\limits_{\mymatrix{K}, \mymatrix{B}_F} & J(\mymatrix{B}_F) \\
			\subject{} & \mymatrix{K}_A + \mymatrix{K}_B = \mymatrix{I} \\
			& \forall \mymatrix{P}_{AB} \in \rA_\text{Split}, \, \mymatrix{B}_F \succeq \mymatrix{C}_F(\mymatrix{K}, \mymatrix{P}_{AB})
		\end{array}\right.
	\end{equation*}
	where $\mymatrix{C}_F(\mymatrix{K}, \mymatrix{P}_{AB})$ is given by \eqref{eq: MSE}.
\end{prob}

\begin{figure}[H]
	\centering
	{\input{fig/conservative_bound.pgf}}
	\caption{Illustration of a conservative fusion with gains $\mymatrix{K}_A = \mymatrix{K}_B = \mymatrix{I}/2$. The numerical values for the bound is $\mymatrix{B}_F = [6,0;0,6]$. The $5$ matrices $\mymatrix{P}_{AB}$ used to generate the ellipsoids $\rE(\mymatrix{C}_F(\mymatrix{K}, \mymatrix{P}_{AB}))$ have been set to $\mymatrix{P}_A^{1/2}\myvect{x}_i\myvect{x}_i^\intercal\mymatrix{P}_B^{1/2}$ with $\myvect{x}_i = [\cos(\pi i/5), \sin(\pi i/5)]^\intercal$.}
	\label{fig: Conservative bound}
\end{figure}

\subsection{Main result: Minimal bound}

The main result is that the bounds which are solution of Problem~\ref{pro: Main problem} are obtained through SCI \eqref{eq: SCI equations}.
\begin{thm}\label{the: Main result}
	Let $(\mymatrix{K}, \mymatrix{B}_F)$ define a conservative fusion. $(\mymatrix{K}, \mymatrix{B}_F)$ is a solution of Problem~\ref{pro: Main problem} if and only if there exists $\omega^* \in \arg\min_{\omega \in [0,1]} J(\mymatrix{B}_{\text{SCI}}(\omega))$ such that $\mymatrix{B}_F = \mymatrix{B}_{\text{SCI}}(\omega^*)$.
\end{thm}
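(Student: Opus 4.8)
The plan is to convert Problem~\ref{pro: Main problem}, an optimization over the two matrix variables $\mymatrix{K}$ and $\mymatrix{B}_F$, into a purely geometric containment problem and then exploit that $J$ is \emph{strictly} increasing in the Loewner order. The starting point is the equivalence already recorded in the text: $(\mymatrix{K}, \mymatrix{B}_F)$ is conservative if and only if $\rV(\mymatrix{K}) \subseteq \rE(\mymatrix{B}_F)$, i.e. $\mymatrix{B}_F \succeq \mymatrix{C}_F(\mymatrix{K}, \mymatrix{P}_{AB})$ for all $\mymatrix{P}_{AB} \in \rA_{\text{Split}}$. Because $J$ is strictly increasing, any optimal $\mymatrix{B}_F$ must be Loewner-minimal among conservative bounds: if a conservative $\mymatrix{B}_F' \prec \mymatrix{B}_F$ existed, then $J(\mymatrix{B}_F') < J(\mymatrix{B}_F)$ would contradict optimality. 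Thus it suffices to show that the Loewner-minimal conservative bounds are precisely the SCI family $\set{\mymatrix{B}_{\text{SCI}}(\omega) \suchthat \omega \in [0,1]}$, and then to read off the role of $J$.

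The second step, which is the content of Section~\ref{sec: Minimal volume}, is to identify a minimal volume $\rV^*$ that every conservative bound must contain. For a fixed gain I would parametrize the admissible cross-covariances through the Schur-complement form $\mymatrix{P}_{AB} = \mymatrix{P}_A^{1/2}\mymatrix{M}\mymatrix{P}_B^{1/2}$ with $\mymatrix{M}^\intercal\mymatrix{M} \preceq \mymatrix{I}$, and compute, direction by direction, the worst case of $\myvect{x}^\intercal \mymatrix{C}_F(\mymatrix{K},\mymatrix{P}_{AB})\myvect{x}$; the supremum over $\mymatrix{M}$ is attained at an extreme point ($\mymatrix{M}$ orthogonal), giving an explicit support-function description of $\rV(\mymatrix{K})$. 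Setting $\rV^* := \bigcap_{\mymatrix{K}} \rV(\mymatrix{K})$ then yields $\rV^* \subseteq \rE(\mymatrix{B}_F)$ for every conservative fusion, since $\rV^* \subseteq \rV(\mymatrix{K}) \subseteq \rE(\mymatrix{B}_F)$. I would also substitute the SCI gains into \eqref{eq: MSE} to check that each $\mymatrix{B}_{\text{SCI}}(\omega)$ is genuinely conservative and circumscribes $\rV^*$, which simultaneously shows $\rV^*$ is achievable and produces the candidate optimal family.

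The third step (Section~\ref{sec: Tightness}) is the tightness characterization: the Loewner-minimal ellipsoids containing $\rV^*$ are \emph{exactly} the SCI ellipsoids, and every ellipsoid containing $\rV^*$ dominates some $\mymatrix{B}_{\text{SCI}}(\omega)$. One inclusion shows each $\rE(\mymatrix{B}_{\text{SCI}}(\omega))$ meets $\partial\rV^*$ so tightly that no matrix strictly below it in the Loewner order keeps $\rV^*$ enclosed; the reverse shows that any minimal circumscribing ellipsoid satisfies tangency conditions solved uniquely by some $\omega \in [0,1]$. The two directions of the theorem then close. For ``only if'': an optimal $\mymatrix{B}_F$ contains $\rV^*$, hence dominates a conservative $\mymatrix{B}_{\text{SCI}}(\omega^*)$, and Loewner-minimality forces $\mymatrix{B}_F = \mymatrix{B}_{\text{SCI}}(\omega^*)$, after which strict monotonicity of $J$ gives $\omega^* \in \arg\min_{\omega} J(\mymatrix{B}_{\text{SCI}}(\omega))$. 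For ``if'': any conservative $\mymatrix{B}_F'$ contains $\rV^*$, so $\mymatrix{B}_{\text{SCI}}(\omega') \preceq \mymatrix{B}_F'$ for some $\omega'$, whence $J(\mymatrix{B}_F') \ge J(\mymatrix{B}_{\text{SCI}}(\omega')) \ge J(\mymatrix{B}_{\text{SCI}}(\omega^*))$, proving $\mymatrix{B}_{\text{SCI}}(\omega^*)$ optimal.

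The hard part will be the minimal-volume step coupled with the uniqueness half of the tightness argument. Establishing that \emph{every} conservative bound contains $\rV^*$ is the true lower bound of the problem: it must exclude the possibility that some unusual gain $\mymatrix{K}$ shapes $\rV(\mymatrix{K})$ so as to admit a bound smaller than all SCI bounds in some direction, which requires controlling the joint behaviour of the worst-case $\mymatrix{P}_{AB}$ and the minimization over $\mymatrix{K}$ at once. Equally delicate is showing that SCI bounds are the \emph{only} tight circumscribing ellipsoids, not merely that they circumscribe, which needs a careful description of where $\rE(\mymatrix{B}_{\text{SCI}}(\omega))$ touches $\partial\rV^*$ and a uniqueness argument for the resulting tangency equations. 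The uncorrelated blocks $\mymatrix{Q}_A, \mymatrix{Q}_B$ deform the simple ellipsoid-intersection geometry underlying CI, and it is precisely this deformation that makes the analysis substantially heavier than the pure-CI case of \cite{reinhardt2015minimum}.
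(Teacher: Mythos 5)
Your overall architecture (a minimal volume that every conservative bound must contain, a tightness characterization showing the minimal circumscribing ellipsoids are SCI bounds, then strict Loewner-monotonicity of $J$ to close both directions) is exactly the paper's, and your final assembly in the third paragraph matches its Section~\ref{sec: Proof} almost verbatim. The genuine gap is in how you set up the minimal volume. You define $\rV^* := \bigcap_{\mymatrix{K}} \rV(\mymatrix{K})$ and then concede that the hard part is ``controlling the joint behaviour of the worst-case $\mymatrix{P}_{AB}$ and the minimization over $\mymatrix{K}$ at once'' --- but that min--max coupling is precisely what the paper dissolves \emph{before} it arises, via the Bar-Shalom--Campo formula (its Lemma~\ref{lem: Optimal gain for a cross-covariance}): for each fixed $\mymatrix{P}_{AB}$ the minimization over $\mymatrix{K}$ has a closed-form Loewner minimum $\mymatrix{C}_F^*(\mymatrix{P}_{AB})$, so one sets $\rV^* := \bigcup_{\mymatrix{P}_{AB}\in\rA_\text{Split}} \rE(\mymatrix{C}_F^*(\mymatrix{P}_{AB}))$ and the lower bound follows from the two-line chain $\mymatrix{B}_F \succeq \mymatrix{C}_F(\mymatrix{K},\mymatrix{P}_{AB}) \succeq \mymatrix{C}_F^*(\mymatrix{P}_{AB})$. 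Your per-direction support-function computation of $\rV(\mymatrix{K})$ (worst case over $\mymatrix{M}$ with $\mymatrix{M}^\intercal\mymatrix{M}\preceq\mymatrix{I}$, attained at an extreme point) is fine for one fixed gain, but intersecting these bodies over \emph{all} gains $\mymatrix{K}$ admits no tractable description, so the route as written stalls at exactly the step you flag as unresolved; without the Bar-Shalom--Campo decoupling the plan does not go through.

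The remaining real content, which your sketch defers entirely, is (i) the identity $g(\myvect{x}) = \max_{\omega\in[0,1]} \myvect{x}^\intercal\mymatrix{H}_\SCI(\omega)\myvect{x}$ (the paper's Theorem~\ref{the: Expression of g}), proved by a three-case exhaustion using strict concavity of $\omega\mapsto\myvect{x}^\intercal\mymatrix{H}_\SCI(\omega)\myvect{x}$ and, crucially, \emph{explicit} rank-one worst-case correlations $\mymatrix{P}_{AB}^* = \mymatrix{P}_A^{1/2}\mymatrix{\Omega}\mymatrix{P}_B^{1/2}$ for which the Bar-Shalom--Campo optimum meets the SCI bound in the given direction; and (ii) the Kahan-style tightness theorem (its Theorem~\ref{the: Tight inclusion}), whose delicate sub-case $\mymatrix{H}_\SCI'(\omega_0)\myvect{z}=\myvect{0}$ needs the implicit function theorem to obtain the second-order matching $g(\myvect{x}(\lambda)) = \myvect{x}(\lambda)^\intercal\mymatrix{H}_\SCI(\omega_0)\myvect{x}(\lambda)+o(\lambda^2)$. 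Naming these as ``the hard part'' is not the same as proving them, so the proposal is a plan, not a proof. One statement is also false as written: the Loewner-minimal conservative bounds are \emph{not} ``precisely the SCI family'' --- not every SCI bound tightly circumscribes $\rV^*$ (the paper's Theorem~\ref{the: Condition for tightness} gives the iff criterion, and $\mymatrix{B}_\SCI(0)=\mymatrix{C}_B$ in its Figure~\ref{fig: SCI tightness} is a counterexample). Your two directional arguments only use the correct inclusion (every tight circumscribing ellipsoid is an SCI bound), so this overclaim is repairable, but it should be corrected.
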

In particular, SCI gives a particular solution to Problem~\ref{pro: Main problem}.
\begin{cor}
	A solution of Problem~\ref{pro: Main problem} is:
	\begin{subequations}\begin{align}
			\mymatrix{K}_A^* &= \omega^* \mymatrix{B}_{\text{SCI}}(\omega^*)(\mymatrix{P}_A + \omega^* \mymatrix{Q}_A)^{-1}, \\
			\mymatrix{K}_B^* &= \bar\omega^* \mymatrix{B}_{\text{SCI}}(\omega^*)(\mymatrix{P}_B + \bar\omega^* \mymatrix{Q}_B)^{-1}, \\
			\mymatrix{B}_F^* &= \mymatrix{B}_{\text{SCI}}(\omega^*),
	\end{align}\end{subequations}
	where $\omega^* \in \arg\min_{\omega \in [0,1]} J(\mymatrix{B}_{\text{SCI}}(\omega))$.
\end{cor}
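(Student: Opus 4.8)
The plan is to deduce the corollary from Theorem~\ref{the: Main result} by exhibiting that the proposed triple is an admissible point of Problem~\ref{pro: Main problem} whose bound is exactly $\mymatrix{B}_{\text{SCI}}(\omega^*)$. Theorem~\ref{the: Main result} identifies the optimal value of Problem~\ref{pro: Main problem} as $J(\mymatrix{B}_{\text{SCI}}(\omega^*))$ and asserts that a conservative fusion carrying the bound $\mymatrix{B}_{\text{SCI}}(\omega^*)$ (with $\omega^*$ a minimizer) is a solution. Hence it suffices to prove that $(\mymatrix{K}^*, \mymatrix{B}_{\text{SCI}}(\omega^*))$ is \emph{feasible}, which splits into two verifications: the unbiasedness constraint \eqref{eq: Constraint unbiasedness}, and the conservativeness inequality $\mymatrix{B}_{\text{SCI}}(\omega^*) \succeq \mymatrix{C}_F(\mymatrix{K}^*, \mymatrix{P}_{AB})$ for every $\mymatrix{P}_{AB} \in \rA_\text{Split}$, with $\mymatrix{C}_F$ given by \eqref{eq: MSE}.

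Unbiasedness is immediate from the information form \eqref{eq: SCI equations}: factoring $\mymatrix{B}_{\text{SCI}}(\omega^*)$ out of $\mymatrix{K}_A^* + \mymatrix{K}_B^*$ leaves $\omega^*(\mymatrix{P}_A + \omega^* \mymatrix{Q}_A)^{-1} + \bar\omega^*(\mymatrix{P}_B + \bar\omega^* \mymatrix{Q}_B)^{-1}$, which is precisely $\mymatrix{B}_{\text{SCI}}(\omega^*)^{-1}$, so $\mymatrix{K}_A^* + \mymatrix{K}_B^* = \mymatrix{I}$. For conservativeness, I would first reduce the Loewner inequality to a family of scalar inequalities via support functions: $\mymatrix{B}_{\text{SCI}}(\omega^*) \succeq \mymatrix{C}_F(\mymatrix{K}^*,\mymatrix{P}_{AB})$ for all $\mymatrix{P}_{AB}$ holds if and only if, for every $\myvect{v}$, $\myvect{v}^\intercal \mymatrix{B}_{\text{SCI}}(\omega^*)\myvect{v} \ge \sup_{\mymatrix{P}_{AB} \in \rA_\text{Split}} \myvect{v}^\intercal \mymatrix{C}_F(\mymatrix{K}^*, \mymatrix{P}_{AB})\myvect{v}$. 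Writing $\myvect{a} = \mymatrix{K}_A^{*\intercal}\myvect{v}$ and $\myvect{b} = \mymatrix{K}_B^{*\intercal}\myvect{v}$, only the cross term of \eqref{eq: MSE} depends on $\mymatrix{P}_{AB}$; parametrizing the admissible set as $\mymatrix{P}_{AB} = \mymatrix{P}_A^{1/2}\mymatrix{M}\mymatrix{P}_B^{1/2}$ with $\|\mymatrix{M}\| \le 1$ and applying Cauchy--Schwarz yields the explicit worst case
\[
	\sup_{\mymatrix{P}_{AB}} \myvect{v}^\intercal \mymatrix{C}_F \myvect{v} = \big(\sqrt{\myvect{a}^\intercal \mymatrix{P}_A \myvect{a}} + \sqrt{\myvect{b}^\intercal \mymatrix{P}_B \myvect{b}}\big)^2 + \myvect{a}^\intercal \mymatrix{Q}_A \myvect{a} + \myvect{b}^\intercal \mymatrix{Q}_B \myvect{b}.
\]

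It then remains to bound this quantity by $\myvect{v}^\intercal \mymatrix{B}_{\text{SCI}}(\omega^*)\myvect{v}$. The idea is to split the cross term with the weighted AM--GM inequality $2\sqrt{(\myvect{a}^\intercal\mymatrix{P}_A\myvect{a})(\myvect{b}^\intercal\mymatrix{P}_B\myvect{b})} \le \tfrac{\bar\omega^*}{\omega^*}\myvect{a}^\intercal\mymatrix{P}_A\myvect{a} + \tfrac{\omega^*}{\bar\omega^*}\myvect{b}^\intercal\mymatrix{P}_B\myvect{b}$, and then to recombine using $\mymatrix{C}_A = \mymatrix{P}_A + \mymatrix{Q}_A$ and $\mymatrix{C}_B = \mymatrix{P}_B + \mymatrix{Q}_B$, which turns the bound into $\tfrac1{\omega^*}\myvect{a}^\intercal(\mymatrix{P}_A + \omega^*\mymatrix{Q}_A)\myvect{a} + \tfrac1{\bar\omega^*}\myvect{b}^\intercal(\mymatrix{P}_B + \bar\omega^*\mymatrix{Q}_B)\myvect{b}$. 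With the specific gains $\mymatrix{K}^*$, the pair $\myvect{a},\myvect{b}$ is exactly the minimizer of this parallel-sum expression subject to $\myvect{a} + \myvect{b} = \myvect{v}$, whose minimal value equals $\myvect{v}^\intercal\mymatrix{B}_{\text{SCI}}(\omega^*)\myvect{v}$ by the inversion identity for $\omega^*(\mymatrix{P}_A+\omega^*\mymatrix{Q}_A)^{-1} + \bar\omega^*(\mymatrix{P}_B+\bar\omega^*\mymatrix{Q}_B)^{-1}$; the chain of inequalities then closes, conservativeness follows, and feasibility (hence optimality, by Theorem~\ref{the: Main result}) is established. The main obstacle is precisely this last step: pinning down the worst-case $\mymatrix{P}_{AB}$ in closed form and certifying, through the AM--GM/parallel-sum reparametrization, that the resulting quadratic form never exceeds the SCI bound for these particular gains --- the one place where the split structure $\mymatrix{C} = \mymatrix{P} + \mymatrix{Q}$ and the explicit form of $\mymatrix{K}^*$ must be exploited jointly.
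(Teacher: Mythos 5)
Your proposal is correct in substance, but it takes a genuinely different and more self-contained route than the paper. The paper offers no separate proof of this corollary at all: it treats conservativeness of the SCI fusion rule over $\rA_{\text{Split}}$ as a known fact from \cite{julier2001general} (stated in Section~\ref{sec: Background} and invoked repeatedly inside the proofs of Theorems~\ref{the: Expression of g}, \ref{the: Tight inclusion} and \ref{the: Condition for tightness} via ``since SCI bounds are conservative''), observes that the stated gains are just the SCI equations \eqref{eq: SCI equations} rewritten in the fused form \eqref{eq: Fused state}, and lets Theorem~\ref{the: Main result} do the rest. You instead re-derive SCI conservativeness from scratch: scalarizing the Loewner constraint, computing the worst-case cross term $2\norm{\mymatrix{P}_A^{1/2}\myvect{a}}\,\norm{\mymatrix{P}_B^{1/2}\myvect{b}}$ over contractions, splitting it by weighted AM--GM, and closing via the parallel-sum identity. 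Your computation checks out, and the final observation is the nicest part: with $\myvect{a}=\mymatrix{K}_A^{*\intercal}\myvect{v}$ and $\myvect{b}=\mymatrix{K}_B^{*\intercal}\myvect{v}$, the pair $(\myvect{a},\myvect{b})$ is exactly the minimizer of $\frac{1}{\omega^*}\myvect{a}^\intercal(\mymatrix{P}_A+\omega^*\mymatrix{Q}_A)\myvect{a}+\frac{1}{\bar\omega^*}\myvect{b}^\intercal(\mymatrix{P}_B+\bar\omega^*\mymatrix{Q}_B)\myvect{b}$ over $\myvect{a}+\myvect{b}=\myvect{v}$, with value $\myvect{v}^\intercal\mymatrix{B}_{\text{SCI}}(\omega^*)\myvect{v}$, which explains \emph{why} the SCI gains are the right ones rather than merely verifying an inequality. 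What your route buys is independence from the external reference and some structural insight; what the paper's route buys is brevity, since SCI conservativeness is a prerequisite for its main theorems anyway, and re-proving it inside the corollary would be redundant.

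Two small points need patching. First, your AM--GM weights $\bar\omega^*/\omega^*$ and $\omega^*/\bar\omega^*$ are undefined at the endpoints $\omega^*\in\set{0,1}$; there one gain vanishes (e.g.\ $\mymatrix{K}_A^*=\mymatrix{0}$, $\mymatrix{K}_B^*=\mymatrix{I}$ at $\omega^*=0$), the cross term disappears, and conservativeness is immediate since $\mymatrix{B}_{\text{SCI}}(0)=\mymatrix{C}_B$ and $\mymatrix{B}_{\text{SCI}}(1)=\mymatrix{C}_A$ --- handle this case separately. Second, your parametrization of $\rA_{\text{Split}}$ uses the \emph{converse} of Lemma~\ref{lem: Correlation matrices}: that every admissible $\mymatrix{P}_{AB}$ factors as $\mymatrix{P}_A^{1/2}\mymatrix{M}\mymatrix{P}_B^{1/2}$ with $\mymatrix{M}^\intercal\mymatrix{M}\preceq\mymatrix{I}$. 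This is a standard consequence of the characterization of positive semi-definite block matrices, but the paper's lemma only proves the direction you do not need, so state and justify it explicitly.
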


Our method for proving Theorem~\ref{the: Main result} is similar to the one used in \cite{reinhardt2015minimum} to prove the optimality of CI. First, we characterize in Section~\ref{sec: Minimal volume} a minimal volume that all conservative bounds must contain. Then in Section~\ref{sec: Tightness}, this minimal volume is proved to be \emph{tightly circumscribed} by SCI bounds. The proof of Theorem~\ref{the: Main result} is finally given in Section~\ref{sec: Proof}. To help the reading, in the sequel, the proofs of the lemmas have been placed in the appendix.

\section{Minimal volume of conservative upper-bounds}\label{sec: Minimal volume}

This section introduces a minimal volume that all ellipsoids associated with conservative upper-bounds must contain.

Let $\mymatrix{K} = (\mymatrix{K}_A, \mymatrix{K}_B)$ be a pair of gains and let $\mymatrix{B}_F$ be a conservative upper-bound for the fused estimator $\myrandvect{\hat x}_F(\mymatrix{K})$. By definition, for any admissible $\mymatrix{P}_{AB} \in \rA_\text{Split}$, $\mymatrix{B}_F \succeq \mymatrix{C}_F(\mymatrix{K}, \mymatrix{P}_{AB})$. Furthermore, for a given admissible $\mymatrix{P}_{AB} \in \rA_\text{Split}$, there is a gain $\mymatrix{K}^*$ that minimizes, in the Loewner ordering, the MSE of the fused estimator. It is given by the well-known Bar-Shalom-Campo Fusion formula \cite{bar1986effect}, recalled in the following Lemma.
\begin{lem}\label{lem: Optimal gain for a cross-covariance}
	Let $\mymatrix{P}_{AB} \in \rA_\text{Split}$, for any pair of gains $\mymatrix{K}'$ satisfying \eqref{eq: Constraint unbiasedness}, it holds $\mymatrix{C}_F(\mymatrix{K}', \mymatrix{P}_{AB}) \succeq \mymatrix{C}_F(\mymatrix{K^*}, \mymatrix{P}_{AB})$, with $\mymatrix{K}^*=(\mymatrix{K}_A^*, \mymatrix{I} - \mymatrix{K}_A^*)$ defined as:
	\begin{align}
		\mymatrix{K}_A^* &= (\mymatrix{C}_B - \mymatrix{P}_{AB}^\intercal)\mymatrix{R}^{-1},\\
		\mymatrix{R} &= \mymatrix{C}_A \mymatrix{+ }\mymatrix{C}_B - \mymatrix{P}_{AB} - \mymatrix{P}_{AB}^\intercal.\label{eq: R}
	\end{align}
	We denote $\mymatrix{C}_F^*(\mymatrix{P}_{AB}) = \mymatrix{C}_F(\mymatrix{K^*}, \mymatrix{P}_{AB})$.
\end{lem}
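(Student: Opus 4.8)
The plan is to reduce the constrained minimization to an unconstrained one over a single gain and then complete the square in the Loewner order. First I would use the unbiasedness constraint \eqref{eq: Constraint unbiasedness} to substitute $\mymatrix{K}_B = \mymatrix{I} - \mymatrix{K}_A$ into the MSE expression \eqref{eq: MSE}, turning $\mymatrix{C}_F(\mymatrix{K}, \mymatrix{P}_{AB})$ into a function of the single matrix variable $\mymatrix{K}_A$. Collecting terms by their degree in $\mymatrix{K}_A$, the quadratic part becomes $\mymatrix{K}_A(\mymatrix{C}_A + \mymatrix{C}_B - \mymatrix{P}_{AB} - \mymatrix{P}_{AB}^\intercal)\mymatrix{K}_A^\intercal = \mymatrix{K}_A \mymatrix{R} \mymatrix{K}_A^\intercal$, the affine part becomes $\mymatrix{K}_A(\mymatrix{P}_{AB} - \mymatrix{C}_B) + (\mymatrix{P}_{AB} - \mymatrix{C}_B)^\intercal \mymatrix{K}_A^\intercal$, and the constant part is $\mymatrix{C}_B$, where $\mymatrix{R}$ is as in \eqref{eq: R}.

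Next I would complete the square. Since $\mymatrix{R}$ and $\mymatrix{C}_B$ are symmetric, the choice $\mymatrix{K}_A^* = (\mymatrix{C}_B - \mymatrix{P}_{AB}^\intercal)\mymatrix{R}^{-1}$ is precisely the one for which $-\mymatrix{R}\mymatrix{K}_A^{*\intercal} = \mymatrix{P}_{AB} - \mymatrix{C}_B$, so that the cross terms in $(\mymatrix{K}_A - \mymatrix{K}_A^*)\mymatrix{R}(\mymatrix{K}_A - \mymatrix{K}_A^*)^\intercal$ reproduce the affine part above. This yields the identity $\mymatrix{C}_F(\mymatrix{K}, \mymatrix{P}_{AB}) = (\mymatrix{K}_A - \mymatrix{K}_A^*)\mymatrix{R}(\mymatrix{K}_A - \mymatrix{K}_A^*)^\intercal + \mymatrix{C}_F^*(\mymatrix{P}_{AB})$, valid for every gain $\mymatrix{K}$ satisfying \eqref{eq: Constraint unbiasedness}, where the residual $\mymatrix{C}_F^*(\mymatrix{P}_{AB}) = \mymatrix{C}_B - \mymatrix{K}_A^* \mymatrix{R} \mymatrix{K}_A^{*\intercal}$ is independent of $\mymatrix{K}$ and coincides with $\mymatrix{C}_F(\mymatrix{K}^*, \mymatrix{P}_{AB})$ by construction.

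Finally I would invoke the positivity of $\mymatrix{R}$. Because $\mymatrix{C}_{AB} = \mymatrix{P}_{AB}$, the matrix $\mymatrix{R}$ equals $\E[(\myrandvect{\tilde x}_A - \myrandvect{\tilde x}_B)(\myrandvect{\tilde x}_A - \myrandvect{\tilde x}_B)^\intercal]$, hence $\mymatrix{R} \succeq \mymatrix{0}$; factoring $\mymatrix{R} = \mymatrix{R}^{1/2}\mymatrix{R}^{1/2}$ then shows $(\mymatrix{K}_A' - \mymatrix{K}_A^*)\mymatrix{R}(\mymatrix{K}_A' - \mymatrix{K}_A^*)^\intercal = (\mymatrix{M})(\mymatrix{M})^\intercal \succeq \mymatrix{0}$ with $\mymatrix{M} = (\mymatrix{K}_A' - \mymatrix{K}_A^*)\mymatrix{R}^{1/2}$, for any admissible $\mymatrix{K}'$. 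Substituting into the completed-square identity gives $\mymatrix{C}_F(\mymatrix{K}', \mymatrix{P}_{AB}) \succeq \mymatrix{C}_F^*(\mymatrix{P}_{AB}) = \mymatrix{C}_F(\mymatrix{K}^*, \mymatrix{P}_{AB})$, which is exactly the claim. The only delicate point is the status of $\mymatrix{R}$: its invertibility must be guaranteed for $\mymatrix{K}_A^*$ to be well defined, and this follows from the standing positive-definiteness of the covariances, since $\mymatrix{R} = (\mymatrix{P}_A + \mymatrix{P}_B - \mymatrix{P}_{AB} - \mymatrix{P}_{AB}^\intercal) + \mymatrix{Q}_A + \mymatrix{Q}_B \succeq \mymatrix{Q}_A + \mymatrix{Q}_B$ with the first parenthesized term being itself a covariance; note that the inequality of the lemma requires only $\mymatrix{R} \succeq \mymatrix{0}$, so invertibility is needed purely to make the closed-form gain meaningful.
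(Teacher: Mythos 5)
Your proof is correct and is essentially the paper's own argument: the appendix proof of this lemma is exactly your completed-square identity, namely $\mymatrix{C}_F(\mymatrix{K},\mymatrix{P}_{AB}) - \mymatrix{C}_F^*(\mymatrix{P}_{AB}) = (\mymatrix{K}_A - \mymatrix{K}_A^*)\,\mymatrix{R}\,(\mymatrix{K}_A - \mymatrix{K}_A^*)^\intercal \succeq \mymatrix{0}$ after substituting $\mymatrix{K}_B = \mymatrix{I} - \mymatrix{K}_A$. The only difference is that you spell out the justification of $\mymatrix{R} \succeq \mymatrix{0}$ and of the invertibility of $\mymatrix{R}$ (via $\mymatrix{R} \succeq \mymatrix{Q}_A + \mymatrix{Q}_B$ together with the positive semi-definiteness of the split block matrix), which the paper leaves implicit.
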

Lemma~\ref{lem: Optimal gain for a cross-covariance} implies that:
\begin{align}\label{eq: Minimal lower-Bound (1)}
	\forall \mymatrix{P}_{AB} &\in \rA_\text{Split},& \mymatrix{B}_F &\succeq \mymatrix{C}_F(\mymatrix{K}, \mymatrix{P}_{AB}) \succeq \mymatrix{C}_F^*(\mymatrix{P}_{AB}).
\end{align}
For geometric interpretations, and as SCI bounds are defined using inverses, it is more convenient to work with precision matrices instead of covariance matrices. Let us introduce $\mymatrix{H}_F = \mymatrix{B}_F^{-1}$ and $\mymatrix{M}_F^*(\mymatrix{P}_{AB}) = \mymatrix{C}_F^*(\mymatrix{P}_{AB})^{-1}$. Eq. \eqref{eq: Minimal lower-Bound (1)} gives:
\begin{align}\label{eq: Minimal lower-Bound}
	\forall \mymatrix{P}_{AB} &\in \rA_\text{Split},& \mymatrix{H}_F &\preceq \mymatrix{M}_F^*(\mymatrix{P}_{AB}).
\end{align}

Let us define the function:
\begin{equation}
	g : \left\{\begin{array}{ccl}
		\R^n & \longrightarrow & \R\\
		\myvect{x} &\longmapsto &\min\limits_{\mymatrix{P}_{AB} \in \rA_\text{Split}} \myvect{x}^\intercal \mymatrix{M}_F^*(\mymatrix{P}_{AB}) \myvect{x}
	\end{array}\right..
\end{equation}
The minimum is well-defined as $\rA_\text{Split}$ is compact. By \eqref{eq: Minimal lower-Bound}, for all $\myvect{x} \in \R^n$, $\myvect{x}^\intercal \mymatrix{H}_F \myvect{x} \le g(\myvect{x})$. In geometric terms, this means that the ellipsoid $\rE(\mymatrix{B}_F)$ contains the union of the ellipsoids $\rE(\mymatrix{C}_F^*(\mymatrix{P}_{AB}))$ for $\mymatrix{P}_{AB} \in \rA_\text{Split}$ . Let $\rV^*$ denote this union:
\begin{equation}
	\rV^* = \bigcup_{\mymatrix{P}_{AB} \in \rA_\text{Split}}\rE(\mymatrix{C}_F^*(\mymatrix{P}_{AB})) = \set{\myvect{x} \suchthat g(\myvect{x}) \le 1}.
\end{equation}
Figure~\ref{fig: Set A^*} represents the set $\rV^*$. In that figure, Lemma~\ref{lem: Optimal gain for a cross-covariance} was illustrated with a particular matrix $\mymatrix{P}_{AB}^0 \in \rA_\text{Split}$: we note as claimed that $\rE(\mymatrix{C}_F^*(\mymatrix{P}_{AB}^{0})) \subseteq \rE(\mymatrix{C}_F(\mymatrix{K}, \mymatrix{P}_{AB}^{0}))$.
\begin{figure}[H]
	\centering
	{\input{fig/SCI_Kalman_ellipses.pgf}}
	\caption{Illustration of the set $\rV^*$. To illustrate Lemma~\ref{lem: Optimal gain for a cross-covariance}, the fusion gains were set to $\mymatrix{K}_A = \mymatrix{K}_B = \mymatrix{I}/2$ and the matrix $\mymatrix{P}_{AB}^0$ to $[2,0;-4.5,-1]$.}
	\label{fig: Set A^*}
\end{figure}

We have proven the following result.
\begin{lem}\label{lem: Minumal volume}
	If $\mymatrix{B}_F$ is an upper-bound for a fused estimator $\myrandvect{\hat x}_F$, then:
	\begin{equation}
		\rV^* \subseteq \rE(\mymatrix{B}_F).
	\end{equation}
\end{lem}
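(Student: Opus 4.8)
The plan is to assemble the two ingredients already established: the definition of a conservative upper-bound and the Bar-Shalom-Campo optimality of Lemma~\ref{lem: Optimal gain for a cross-covariance}. First I would fix the gains $\mymatrix{K}$ for which $\mymatrix{B}_F$ is a conservative upper-bound, so that $\mymatrix{B}_F \succeq \mymatrix{C}_F(\mymatrix{K}, \mymatrix{P}_{AB})$ holds for every admissible $\mymatrix{P}_{AB} \in \rA_\text{Split}$. Chaining this with the lower bound $\mymatrix{C}_F(\mymatrix{K}, \mymatrix{P}_{AB}) \succeq \mymatrix{C}_F^*(\mymatrix{P}_{AB})$ supplied by Lemma~\ref{lem: Optimal gain for a cross-covariance} yields $\mymatrix{B}_F \succeq \mymatrix{C}_F^*(\mymatrix{P}_{AB})$ for all admissible $\mymatrix{P}_{AB}$, which is exactly \eqref{eq: Minimal lower-Bound (1)}.

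Next I would pass to precision matrices. Since matrix inversion reverses the Loewner ordering on positive definite matrices, inverting the previous inequality gives $\mymatrix{H}_F \preceq \mymatrix{M}_F^*(\mymatrix{P}_{AB})$ for every $\mymatrix{P}_{AB} \in \rA_\text{Split}$, that is, \eqref{eq: Minimal lower-Bound}. Evaluating the associated quadratic forms at an arbitrary $\myvect{x} \in \R^n$ then gives $\myvect{x}^\intercal \mymatrix{H}_F \myvect{x} \le \myvect{x}^\intercal \mymatrix{M}_F^*(\mymatrix{P}_{AB}) \myvect{x}$ for all $\mymatrix{P}_{AB}$; taking the minimum over $\rA_\text{Split}$ on the right-hand side produces $\myvect{x}^\intercal \mymatrix{H}_F \myvect{x} \le g(\myvect{x})$ for all $\myvect{x} \in \R^n$.

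Finally I would invoke the characterization $\rV^* = \set{\myvect{x} \suchthat g(\myvect{x}) \le 1}$. For any $\myvect{x} \in \rV^*$ one has $g(\myvect{x}) \le 1$, hence $\myvect{x}^\intercal \mymatrix{H}_F \myvect{x} \le g(\myvect{x}) \le 1$, so $\myvect{x}$ belongs to $\rE(\mymatrix{B}_F) = \set{\myvect{x} \suchthat \myvect{x}^\intercal \mymatrix{H}_F \myvect{x} \le 1}$. This establishes the claimed inclusion $\rV^* \subseteq \rE(\mymatrix{B}_F)$.

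The argument is essentially bookkeeping once Lemma~\ref{lem: Optimal gain for a cross-covariance} is in hand, so there is no deep obstacle; the two points requiring care are the order-reversal under inversion, which needs the matrices involved to be positive definite so that the inverses $\mymatrix{H}_F$ and $\mymatrix{M}_F^*(\mymatrix{P}_{AB})$ are well defined (as is tacitly assumed in their definitions), and the identification of the union of ellipsoids with the sublevel set $\set{\myvect{x} \suchthat g(\myvect{x}) \le 1}$, which relies on the compactness of $\rA_\text{Split}$ so that the minimum defining $g$ is attained and the non-strict inequalities line up exactly.
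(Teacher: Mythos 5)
Your proposal is correct and follows essentially the same route as the paper: chain the conservativeness inequality with Lemma~\ref{lem: Optimal gain for a cross-covariance} to obtain \eqref{eq: Minimal lower-Bound (1)}, invert to get \eqref{eq: Minimal lower-Bound} on the precision matrices, and conclude via the quadratic-form bound $\myvect{x}^\intercal \mymatrix{H}_F \myvect{x} \le g(\myvect{x})$ together with the identification $\rV^* = \set{\myvect{x} \suchthat g(\myvect{x}) \le 1}$. Your two cautionary remarks (order reversal under inversion requires positive definiteness, and the compactness of $\rA_\text{Split}$ guarantees the minimum defining $g$ is attained) are exactly the points the paper handles implicitly, so nothing further is needed.
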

The rest of this section gives an alternative characterization of the set $\rV^*$ or equivalently, of function $g$.

For all $\omega \in [0,1]$, introduce $\mymatrix{H}_\SCI(\omega) = \mymatrix{B}_\SCI(\omega)^{-1}$ the precision matrix associated with the SCI bound, and consider $\mymatrix{H}_\SCI'(\omega)$ and $\mymatrix{H}_\SCI''(\omega)$ its first and second derivatives with respect to $\omega$. Recalling \eqref{eq: SCI equations}, they are computed as:
\begin{subequations}
	\begin{align}
		\mymatrix{H}_\SCI(\omega) &= \omega\mymatrix{A}(\omega) + \bar\omega \mymatrix{B}(\bar\omega),\\
		\mymatrix{H}_\SCI'(\omega) &= \mymatrix{A}(\omega)\mymatrix{P}_A\mymatrix{A}(\omega) - \mymatrix{B}(\omega)\mymatrix{P}_B\mymatrix{B}(\bar\omega), \\
		\mymatrix{H}_\SCI''(\omega) &= -\mymatrix{A}(\omega)[\mymatrix{Q}_A\mymatrix{A}(\omega)\mymatrix{P}_A + \mymatrix{P}_A\mymatrix{A}(\omega)\mymatrix{Q}_A]\mymatrix{A}(\omega) \notag{} \\
		&-\mymatrix{B}(\bar\omega)[\mymatrix{Q}_B\mymatrix{B}(\bar\omega)\mymatrix{P}_B+\mymatrix{P}_B\mymatrix{B}(\bar\omega)\mymatrix{Q}_B]\mymatrix{B}(\bar\omega),
	\end{align}
\end{subequations}
with:
\begin{align}
	\mymatrix{A}(\omega) &= (\mymatrix{P}_A + \omega \mymatrix{Q}_A)^{-1}, &
	\mymatrix{B}(\omega) &= (\mymatrix{P}_B + \omega \mymatrix{Q}_B)^{-1}.
\end{align}
In particular, $\mymatrix{A}(0) = \mymatrix{P}_A^{-1}$, $\mymatrix{A}(1) = \mymatrix{C}_A^{-1}$, $\mymatrix{B}(0) = \mymatrix{P}_B^{-1}$, and $\mymatrix{B}(1) = \mymatrix{C}_B^{-1}$. The following lemmas are necessary to characterize function $g$.
\begin{lem}\label{lem: Concavity}
	For all $\omega \in [0,1]$, $\mymatrix{H}_\SCI''(\omega) \prec 0$.
\end{lem}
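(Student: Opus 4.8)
The plan is to show that $-\mymatrix{H}_\SCI''(\omega)$ is a sum of two positive-definite matrices. Reading off the given formula, $\mymatrix{H}_\SCI''(\omega) = -\mymatrix{A}(\omega)\mymatrix{S}_A\mymatrix{A}(\omega) - \mymatrix{B}(\bar\omega)\mymatrix{S}_B\mymatrix{B}(\bar\omega)$ with the symmetric matrices $\mymatrix{S}_A = \mymatrix{Q}_A\mymatrix{A}(\omega)\mymatrix{P}_A + \mymatrix{P}_A\mymatrix{A}(\omega)\mymatrix{Q}_A$ and $\mymatrix{S}_B = \mymatrix{Q}_B\mymatrix{B}(\bar\omega)\mymatrix{P}_B + \mymatrix{P}_B\mymatrix{B}(\bar\omega)\mymatrix{Q}_B$. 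Since $\mymatrix{A}(\omega)$ and $\mymatrix{B}(\bar\omega)$ are invertible, congruence preserves positive definiteness, so it suffices to prove $\mymatrix{S}_A \succ 0$ and $\mymatrix{S}_B \succ 0$; then $\mymatrix{A}(\omega)\mymatrix{S}_A\mymatrix{A}(\omega) \succ 0$ and $\mymatrix{B}(\bar\omega)\mymatrix{S}_B\mymatrix{B}(\bar\omega) \succ 0$, and their sum, which equals $-\mymatrix{H}_\SCI''(\omega)$, is positive definite.

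The key simplification I would use is to eliminate $\mymatrix{P}_A$ through the identity $\mymatrix{P}_A = \mymatrix{A}(\omega)^{-1} - \omega\mymatrix{Q}_A$, immediate from $\mymatrix{A}(\omega) = (\mymatrix{P}_A + \omega\mymatrix{Q}_A)^{-1}$. Substituting gives $\mymatrix{Q}_A\mymatrix{A}(\omega)\mymatrix{P}_A = \mymatrix{Q}_A - \omega\mymatrix{Q}_A\mymatrix{A}(\omega)\mymatrix{Q}_A$, and symmetrically for the transposed summand, so that $\mymatrix{S}_A = 2\bigl(\mymatrix{Q}_A - \omega\mymatrix{Q}_A\mymatrix{A}(\omega)\mymatrix{Q}_A\bigr)$. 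Factoring out $\mymatrix{Q}_A^{1/2}$ then reduces everything to the positivity of a central bracket, namely $\mymatrix{S}_A = 2\mymatrix{Q}_A^{1/2}\bigl(\mymatrix{I} - \omega\mymatrix{Q}_A^{1/2}\mymatrix{A}(\omega)\mymatrix{Q}_A^{1/2}\bigr)\mymatrix{Q}_A^{1/2}$.

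To analyse the bracket I would diagonalise. Setting $\mymatrix{\tilde{P}}_A = \mymatrix{Q}_A^{-1/2}\mymatrix{P}_A\mymatrix{Q}_A^{-1/2} \succ 0$, inverting the product $\mymatrix{Q}_A^{1/2}(\mymatrix{P}_A + \omega\mymatrix{Q}_A)^{-1}\mymatrix{Q}_A^{1/2}$ shows that $\mymatrix{Q}_A^{1/2}\mymatrix{A}(\omega)\mymatrix{Q}_A^{1/2} = (\mymatrix{\tilde{P}}_A + \omega\mymatrix{I})^{-1}$, and hence
\begin{equation*}
	\mymatrix{I} - \omega\mymatrix{Q}_A^{1/2}\mymatrix{A}(\omega)\mymatrix{Q}_A^{1/2} = \mymatrix{\tilde{P}}_A(\mymatrix{\tilde{P}}_A + \omega\mymatrix{I})^{-1} \succ 0,
\end{equation*}
the positivity being clear because $\mymatrix{\tilde{P}}_A$ and $(\mymatrix{\tilde{P}}_A + \omega\mymatrix{I})^{-1}$ are simultaneously diagonalisable, commuting, positive-definite matrices. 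This yields $\mymatrix{S}_A \succ 0$, and the identical argument with $\mymatrix{B}(\bar\omega)$ and $\mymatrix{\tilde{P}}_B = \mymatrix{Q}_B^{-1/2}\mymatrix{P}_B\mymatrix{Q}_B^{-1/2}$ gives $\mymatrix{S}_B \succ 0$, completing the proof.

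The main obstacle to flag is the behaviour at the endpoints together with the role of non-degeneracy. A naive bound such as $\mymatrix{A}(\omega) \preceq \omega^{-1}\mymatrix{Q}_A^{-1}$ would degenerate as $\omega \to 0$, which is precisely why I would route the argument through the identity $\mymatrix{Q}_A^{1/2}\mymatrix{A}(\omega)\mymatrix{Q}_A^{1/2} = (\mymatrix{\tilde{P}}_A + \omega\mymatrix{I})^{-1}$: the resulting expression $\mymatrix{\tilde{P}}_A(\mymatrix{\tilde{P}}_A + \omega\mymatrix{I})^{-1}$ is manifestly positive definite for every $\omega \in [0,1]$, reducing to $\mymatrix{I}$ at $\omega = 0$, so no endpoint is special. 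The genuine point of care is that the strict inequality relies essentially on the standing assumption that $\mymatrix{P}_A, \mymatrix{P}_B, \mymatrix{Q}_A, \mymatrix{Q}_B$ are all positive definite: if the uncorrelated parts $\mymatrix{Q}_A, \mymatrix{Q}_B$ vanish, SCI collapses to CI, $\mymatrix{H}_\SCI(\omega)$ becomes affine in $\omega$, and $\mymatrix{H}_\SCI''(\omega) = 0$. The reduction above is what makes this dependence transparent and keeps the concluding step purely a congruence argument.
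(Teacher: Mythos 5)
Your proof is correct, and it takes a recognizably different route from the paper's on the one step that matters. Both arguments share the same skeleton: peel off the outer congruence by the invertible symmetric matrices $\mymatrix{A}(\omega)$ and $\mymatrix{B}(\bar\omega)$ and show the bracketed middle terms are positive definite. But the paper disposes of the bracket with a single parallel-sum identity, $\mymatrix{P}(\mymatrix{P}+\mymatrix{Q})^{-1}\mymatrix{Q} = (\mymatrix{P}^{-1}+\mymatrix{Q}^{-1})^{-1} \succ \mymatrix{0}$, applied with $\mymatrix{P} = \mymatrix{P}_A$, $\mymatrix{Q} = \omega\mymatrix{Q}_A$ (and $\mymatrix{P}_B$, $\bar\omega\mymatrix{Q}_B$ for the other term): this shows $\mymatrix{P}_A\mymatrix{A}(\omega)\mymatrix{Q}_A$ is symmetric positive definite, so your $\mymatrix{S}_A$ equals $2\mymatrix{P}_A\mymatrix{A}(\omega)\mymatrix{Q}_A \succ 0$ in one line. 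You instead eliminate $\mymatrix{P}_A$ via $\mymatrix{P}_A = \mymatrix{A}(\omega)^{-1}-\omega\mymatrix{Q}_A$, whiten by $\mymatrix{Q}_A^{1/2}$, and reduce to $\mymatrix{\tilde{P}}_A(\mymatrix{\tilde{P}}_A+\omega\mymatrix{I})^{-1} \succ 0$; every step of that algebra checks out. What the paper's identity buys is brevity; what yours buys is precisely the endpoint uniformity you flag: taken literally, the paper's substitution $\mymatrix{Q} = \omega\mymatrix{Q}_A$ requires $\omega > 0$ for $(\omega\mymatrix{Q}_A)^{-1}$ to exist (and $\bar\omega > 0$ for the $B$ term), so the published argument strictly covers the endpoints only via the direct evaluations $\mymatrix{S}_A = 2\mymatrix{Q}_A$ at $\omega = 0$ and $\mymatrix{S}_B = 2\mymatrix{Q}_B$ at $\omega = 1$, or a continuity remark, whereas your expression $\mymatrix{\tilde{P}}_A(\mymatrix{\tilde{P}}_A+\omega\mymatrix{I})^{-1}$ is manifestly positive definite for every $\omega \in [0,1]$ with no case split. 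Two pieces of housekeeping: in the factorization $\mymatrix{S}_A = 2\mymatrix{Q}_A^{1/2}(\mymatrix{I}-\omega\mymatrix{Q}_A^{1/2}\mymatrix{A}(\omega)\mymatrix{Q}_A^{1/2})\mymatrix{Q}_A^{1/2}$ you must take $\mymatrix{Q}_A^{1/2}$ to be the \emph{symmetric} square root (the paper's notation allows any root, but your congruence needs $\mymatrix{X}\mymatrix{M}\mymatrix{X}^\intercal$ with $\mymatrix{X} = \mymatrix{X}^\intercal$), and your reliance on $\mymatrix{Q}_A, \mymatrix{Q}_B \succ 0$ matches the paper's implicit standing assumption — as you correctly observe, strict negativity of $\mymatrix{H}_\SCI''(\omega)$ genuinely fails when the uncorrelated parts vanish, since SCI then degenerates to CI and $\mymatrix{H}_\SCI$ is affine in $\omega$.
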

\begin{lem}\label{lem: Correlation matrices} Let $\mymatrix{\Omega} \in \R^{n\times n}$ be a matrix.
	If $\mymatrix{\Omega}^\intercal\mymatrix{\Omega} \preceq \mymatrix{I}$, then $\mymatrix{P}_A^{1/2}\mymatrix{\Omega}\mymatrix{P}_B^{1/2} \in \rA_{\text{Split}}$.
\end{lem}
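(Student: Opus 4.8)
The plan is to read membership in $\rA_{\text{Split}}$ as the positive semi-definiteness of the $2\times2$ block matrix appearing in \eqref{eq: Split admissible set}, and to reduce that to the hypothesis $\mymatrix{\Omega}^\intercal\mymatrix{\Omega} \preceq \mymatrix{I}$ by a congruence transformation followed by a Schur complement. First I would set $\mymatrix{C}_{AB} = \mymatrix{P}_A^{1/2}\mymatrix{\Omega}\mymatrix{P}_B^{1/2}$ and observe, using that $\mymatrix{P}_A^{1/2}$ and $\mymatrix{P}_B^{1/2}$ are symmetric with $\mymatrix{P}_A = \mymatrix{P}_A^{1/2}\mymatrix{P}_A^{1/2}$ and $\mymatrix{P}_B = \mymatrix{P}_B^{1/2}\mymatrix{P}_B^{1/2}$, that the relevant block matrix factors as
\[
\begin{bmatrix} \mymatrix{P}_A & \mymatrix{C}_{AB} \\ \mymatrix{C}_{AB}^\intercal & \mymatrix{P}_B \end{bmatrix} = \mymatrix{S}^\intercal \mymatrix{N}\, \mymatrix{S}, \qquad \mymatrix{S} = \begin{bmatrix} \mymatrix{P}_A^{1/2} & \mymatrix{0} \\ \mymatrix{0} & \mymatrix{P}_B^{1/2} \end{bmatrix}, \quad \mymatrix{N} = \begin{bmatrix} \mymatrix{I} & \mymatrix{\Omega} \\ \mymatrix{\Omega}^\intercal & \mymatrix{I} \end{bmatrix}.
\]

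Next, since for any vector $\myvect{z} \in \R^{2n}$ one has $\myvect{z}^\intercal \mymatrix{S}^\intercal \mymatrix{N}\mymatrix{S}\myvect{z} = (\mymatrix{S}\myvect{z})^\intercal \mymatrix{N} (\mymatrix{S}\myvect{z})$, the congruence by $\mymatrix{S}$ preserves positive semi-definiteness, so it suffices to prove $\mymatrix{N} \succeq \mymatrix{0}$. Here I would apply the Schur complement with respect to the $(1,1)$-block: because that block equals $\mymatrix{I} \succ \mymatrix{0}$, the matrix $\mymatrix{N}$ is positive semi-definite if and only if its Schur complement $\mymatrix{I} - \mymatrix{\Omega}^\intercal \mymatrix{I}^{-1}\mymatrix{\Omega} = \mymatrix{I} - \mymatrix{\Omega}^\intercal\mymatrix{\Omega}$ is positive semi-definite, which is precisely the hypothesis $\mymatrix{\Omega}^\intercal\mymatrix{\Omega} \preceq \mymatrix{I}$. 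Chaining the two reductions then yields $\mymatrix{C}_{AB} \in \rA_{\text{Split}}$.

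This lemma is elementary, so there is no substantial obstacle; the only point requiring a little care is that $\mymatrix{P}_A$ and $\mymatrix{P}_B$ need not be invertible, so the argument must not secretly rely on $\mymatrix{S}$ being nonsingular. The formulation above avoids this entirely: the congruence step holds verbatim for any symmetric $\mymatrix{S}$, and the Schur complement is taken with respect to the $\mymatrix{I}$-block of $\mymatrix{N}$ rather than with respect to $\mymatrix{P}_A$, so no inverse of a possibly singular matrix is ever invoked. As an alternative to the Schur complement step, one could prove $\mymatrix{N}\succeq\mymatrix{0}$ directly by completing the square: for $\myvect{u},\myvect{v}\in\R^n$, $\mymatrix{\Omega}^\intercal\mymatrix{\Omega}\preceq\mymatrix{I}$ gives $\norm{\mymatrix{\Omega}\myvect{v}}\le\norm{\myvect{v}}$, whence $\myvect{u}^\intercal\myvect{u} + 2\myvect{u}^\intercal\mymatrix{\Omega}\myvect{v} + \myvect{v}^\intercal\myvect{v} \ge (\norm{\myvect{u}}-\norm{\myvect{v}})^2 \ge 0$.
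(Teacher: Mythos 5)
Your proof is correct and follows essentially the same route as the paper's: the identical congruence factorization of the block matrix as $\mymatrix{S}^\intercal \mymatrix{N}\,\mymatrix{S}$ with $\mymatrix{S} = \mathrm{diag}(\mymatrix{P}_A^{1/2}, \mymatrix{P}_B^{1/2})$ and the reduction to the positive semi-definiteness of $\mymatrix{N} = \bigl[\begin{smallmatrix}\mymatrix{I} & \mymatrix{\Omega}\\ \mymatrix{\Omega}^\intercal & \mymatrix{I}\end{smallmatrix}\bigr]$. The only difference is that where the paper cites a lemma of Horn and Johnson for $\mymatrix{N}\succeq\mymatrix{0}$, you prove it directly via the Schur complement of the identity block (or by completing the square), which is a self-contained substitute for the citation and, as you rightly note, nowhere requires $\mymatrix{P}_A$ or $\mymatrix{P}_B$ to be invertible.
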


\begin{figure}[H]
	\centering
	\resizebox{\linewidth}{!}{\input{fig/SCI_tightness.pgf}}
	\caption{Illustration of Theorem~\ref{the: Expression of g}. The vector $\myvect{z}$ displayed satisfies the third case, it has been normalized so that $g(\myvect{z}) = 1$. The matrices $\mymatrix{C}_F^*(\mymatrix{P}_{AB}^*)$ and $\mymatrix{B}_\SCI(\omega_0)$ are associated with this vector: $g(\myvect{z}) = \myvect{z}^\intercal\mymatrix{M}_F^*(\mymatrix{P}_{AB}^*)\myvect{z}= \myvect{z}^\intercal\mymatrix{H}_\SCI(\omega_0)\myvect{z}$.}
	\label{fig: SCI tightness}
\end{figure}

With these notations and these two lemmas, we are in a position to characterize function $g$.
\begin{thm}\label{the: Expression of g}
	Let $\myvect{x}\in \R^n$, $\myvect{x} \ne \myvect{0}$. The three following cases are mutually exclusive and collectively exhaustive.
	\begin{enumerate}
		\item $\myvect{x}^\intercal \mymatrix{H}_\SCI'(0) \myvect{x} < 0$. In this case, $g(\myvect{x}) = \myvect{x}^\intercal \mymatrix{H}_\SCI(0) \myvect{x}$.
		\item $\myvect{x}^\intercal \mymatrix{H}_\SCI'(1) \myvect{x} > 0$. In this case, $g(\myvect{x}) = \myvect{x}^\intercal \mymatrix{H}_\SCI(1) \myvect{x}$.
		\item There exists a unique $\omega_0 \in [0,1]$ such that $\myvect{x}^\intercal \mymatrix{H}_\SCI'(\omega_0) \myvect{x} = 0$. In this case, $g(\myvect{x}) = \myvect{x}^\intercal \mymatrix{H}_\SCI(\omega_0) \myvect{x}$.
	\end{enumerate}
\end{thm}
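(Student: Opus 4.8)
The plan is to prove the sharper identity
\[
g(\myvect{x}) = \max_{\omega\in[0,1]} \myvect{x}^\intercal \mymatrix{H}_\SCI(\omega)\myvect{x},
\]
and then read off the three cases from concavity. Writing $\phi(\omega)=\myvect{x}^\intercal\mymatrix{H}_\SCI(\omega)\myvect{x}$, Lemma~\ref{lem: Concavity} gives $\phi''(\omega)=\myvect{x}^\intercal\mymatrix{H}_\SCI''(\omega)\myvect{x}<0$ for $\myvect{x}\neq\myvect{0}$, so $\phi$ is strictly concave on $[0,1]$ and $\phi'$ is strictly decreasing. Hence exactly one of the following holds: $\phi'(0)<0$ (maximum at $\omega=0$), $\phi'(1)>0$ (maximum at $\omega=1$), or $\phi'(0)\ge0\ge\phi'(1)$, in which case the intermediate value theorem together with strict monotonicity of $\phi'$ yields a unique interior $\omega_0$ with $\phi'(\omega_0)=0$ attaining the maximum. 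Since $\phi'(0)>\phi'(1)$, these alternatives are mutually exclusive and exhaustive, and because $\phi'(\omega)=\myvect{x}^\intercal\mymatrix{H}_\SCI'(\omega)\myvect{x}$ they are exactly the three cases in the statement. Everything therefore reduces to the displayed identity for $g$.

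To establish that identity I would first rewrite $\mymatrix{M}_F^*$ in information form. The Bar-Shalom-Campo optimum of Lemma~\ref{lem: Optimal gain for a cross-covariance} is the minimum-variance unbiased linear combination, so its precision is the generalized-least-squares matrix $\mymatrix{M}_F^*(\mymatrix{P}_{AB}) = \begin{bmatrix}\mymatrix{I} & \mymatrix{I}\end{bmatrix}\mymatrix{\Sigma}(\mymatrix{P}_{AB})^{-1}\begin{bmatrix}\mymatrix{I} & \mymatrix{I}\end{bmatrix}^\intercal$, where $\mymatrix{\Sigma}(\mymatrix{P}_{AB})$ is the joint error covariance with diagonal blocks $\mymatrix{C}_A,\mymatrix{C}_B$ and off-diagonal block $\mymatrix{P}_{AB}$. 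Letting $\myvect{y}=(\myvect{x},\myvect{x})$ be the stacked vector and using $\myvect{y}^\intercal\mymatrix{\Sigma}^{-1}\myvect{y}=\max_{\myvect{w}}\{2\myvect{w}^\intercal\myvect{y}-\myvect{w}^\intercal\mymatrix{\Sigma}\myvect{w}\}$, I obtain $g(\myvect{x})=\min_{\mymatrix{P}_{AB}\in\rA_\text{Split}}\max_{\myvect{w}}\{2\myvect{w}^\intercal\myvect{y}-\myvect{w}^\intercal\mymatrix{\Sigma}(\mymatrix{P}_{AB})\myvect{w}\}$. The integrand is linear in $\mymatrix{P}_{AB}$ and concave in $\myvect{w}$, and $\rA_\text{Split}$ is compact and convex, so Sion's minimax theorem lets me interchange the min and the max.

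After interchanging, and writing $\myvect{w}=(\myvect{w}_A,\myvect{w}_B)$, the only $\mymatrix{P}_{AB}$-dependent term is $-2\myvect{w}_A^\intercal\mymatrix{P}_{AB}\myvect{w}_B$, whose minimum over $\rA_\text{Split}$ is $-2\norm{\mymatrix{P}_A^{1/2}\myvect{w}_A}\norm{\mymatrix{P}_B^{1/2}\myvect{w}_B}$: the bound $\myvect{w}_A^\intercal\mymatrix{P}_{AB}\myvect{w}_B\le\norm{\mymatrix{P}_A^{1/2}\myvect{w}_A}\norm{\mymatrix{P}_B^{1/2}\myvect{w}_B}$ is the generalized Cauchy–Schwarz inequality attached to the positive-semidefinite block defining $\rA_\text{Split}$, while Lemma~\ref{lem: Correlation matrices} supplies an admissible $\mymatrix{P}_{AB}=\mymatrix{P}_A^{1/2}\mymatrix{\Omega}\mymatrix{P}_B^{1/2}$, with $\mymatrix{\Omega}$ a norm-one map aligning the two vectors, that attains it. Dualizing the square root through Young's inequality $2\sqrt{ab}=\min_{\lambda>0}(\lambda a+\lambda^{-1}b)$ turns the problem into $\max_{\myvect{w}_A,\myvect{w}_B,\lambda>0}\{2(\myvect{w}_A+\myvect{w}_B)^\intercal\myvect{x}-\myvect{w}_A^\intercal(\mymatrix{C}_A+\lambda\mymatrix{P}_A)\myvect{w}_A-\myvect{w}_B^\intercal(\mymatrix{C}_B+\lambda^{-1}\mymatrix{P}_B)\myvect{w}_B\}$. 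The inner maximizations over $\myvect{w}_A$ and $\myvect{w}_B$ are explicit quadratics giving $\myvect{x}^\intercal(\mymatrix{C}_A+\lambda\mymatrix{P}_A)^{-1}\myvect{x}+\myvect{x}^\intercal(\mymatrix{C}_B+\lambda^{-1}\mymatrix{P}_B)^{-1}\myvect{x}$, and the substitution $\omega=1/(1+\lambda)$ together with $\mymatrix{C}_A=\mymatrix{P}_A+\mymatrix{Q}_A$ yields $(\mymatrix{C}_A+\lambda\mymatrix{P}_A)^{-1}=\omega\mymatrix{A}(\omega)$ and $(\mymatrix{C}_B+\lambda^{-1}\mymatrix{P}_B)^{-1}=\bar\omega\mymatrix{B}(\bar\omega)$; the bracket becomes precisely $\phi(\omega)$ and, letting $\lambda$ sweep $(0,\infty)$, the identity follows. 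Note this single computation delivers both the lower bound (which conservativeness of SCI would give) and the tightness at once.

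I expect the main obstacle to be the rigorous justification of the minimax interchange and of the information-form identity on the boundary of $\rA_\text{Split}$, where $\mymatrix{\Sigma}(\mymatrix{P}_{AB})$ may become singular and $\myvect{y}^\intercal\mymatrix{\Sigma}^{-1}\myvect{y}$ must be read as $+\infty$ or handled by a limiting/pseudo-inverse argument. Since such $\mymatrix{P}_{AB}$ never minimize, I would dispatch this either by a perturbation argument or under the standing nondegeneracy assumption $\mymatrix{P}_A,\mymatrix{P}_B\succ\mymatrix{0}$ (which keeps $\phi$ continuous on $[0,1]$), allowing the supremum over $\lambda\in(0,\infty)$ to extend to the closed interval and the endpoints $\omega\in\{0,1\}$ to be matched by continuity.
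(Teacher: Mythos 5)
Your proof is correct, but it follows a genuinely different route from the paper. The paper proves the theorem constructively: for each of the three cases it exhibits an explicit rank-one worst-case cross-covariance $\mymatrix{P}_{AB}^* = \mymatrix{P}_A^{1/2}\mymatrix{\Omega}\mymatrix{P}_B^{1/2}$ (admissible by Lemma~\ref{lem: Correlation matrices}) and verifies, via the Woodbury identity and the three algebraic forms of the Bar-Shalom--Campo covariance in \eqref{eq: Optimal covariances}, that $\myvect{x}^\intercal \mymatrix{M}_F^*(\mymatrix{P}_{AB}^*)\myvect{x} = \myvect{x}^\intercal\mymatrix{H}_\SCI(\omega_0)\myvect{x}$; the reverse inequality is imported from the known conservativeness of SCI. You instead prove the identity $g(\myvect{x}) = \max_{\omega\in[0,1]}\myvect{x}^\intercal\mymatrix{H}_\SCI(\omega)\myvect{x}$ in one sweep, by writing $\myvect{x}^\intercal\mymatrix{M}_F^*(\mymatrix{P}_{AB})\myvect{x}$ in GLS information form, interchanging min and max via Sion (your compactness, convexity and concavity hypotheses all check out), solving the inner minimization by the generalized Cauchy--Schwarz bound $\myvect{w}_A^\intercal\mymatrix{P}_{AB}\myvect{w}_B \le \norm{\mymatrix{P}_A^{1/2}\myvect{w}_A}\,\norm{\mymatrix{P}_B^{1/2}\myvect{w}_B}$ (attained by exactly the paper's rank-one construction), and dualizing the square root with Young's inequality. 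Your approach buys several things the paper does not state at this point: it derives the conservativeness of SCI rather than assuming it; it yields directly the $\max$-characterization of $g$, which the paper only records as a corollary after the theorem; and the substitution $\omega = 1/(1+\lambda)$ gives a conceptual explanation of where the SCI family comes from --- the parameter $\omega$ is the dual variable of the worst-case correlation constraint. The paper's computation, while heavier, is elementary matrix algebra and avoids both the minimax theorem and the regularity bookkeeping your route incurs.

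One refinement to your closing caveat: the standing assumption $\mymatrix{P}_A, \mymatrix{P}_B \succ \mymatrix{0}$ alone does \emph{not} make $\mymatrix{\Sigma}(\mymatrix{P}_{AB})$ nonsingular on all of $\rA_\text{Split}$ (take $\mymatrix{Q}_A = \mymatrix{Q}_B = \mymatrix{0}$ and $\mymatrix{P}_{AB} = \mymatrix{P}_A^{1/2}\mymatrix{\Omega}\mymatrix{P}_B^{1/2}$ with $\mymatrix{\Omega}$ orthogonal), so of your two proposed fixes it is the perturbation argument that actually does the work: replace $\mymatrix{Q}_A, \mymatrix{Q}_B$ by $\mymatrix{Q}_A + \varepsilon\mymatrix{I}, \mymatrix{Q}_B + \varepsilon\mymatrix{I}$, which makes $\mymatrix{\Sigma} \succ \mymatrix{0}$ uniformly over the compact set $\rA_\text{Split}$, then let $\varepsilon \to 0$ using continuity of both sides of your identity in $\varepsilon$. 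This is a fair price: the paper's own proof carries analogous implicit nondegeneracy (invertibility of $\mymatrix{R}$ in Lemma~\ref{lem: Optimal gain for a cross-covariance} and of $\mymatrix{C}_F^*$ in the definition of $\mymatrix{M}_F^*$), so your argument is no less rigorous than the original once the perturbation step is spelled out.
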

\begin{proof}
	 Let $h : \omega \mapsto \myvect{x}^\intercal \mymatrix{H}_\SCI(\omega) \myvect{x}$. Lemma~\ref{lem: Concavity} implies that $h''(\omega) < 0$, so $h$ is strictly concave on $[0,1]$. Therefore, the three cases are mutually exclusive and collectively exhaustive.
	
	Since SCI provides conservative bounds, by Lemma~\ref{lem: Minumal volume}, for all $\myvect{x}$ and for all $\omega \in [0,1]$, $g(\myvect{x}) \ge \myvect{x}^\intercal \mymatrix{H}_\SCI(\omega)\myvect{x}$. If there exist some $\mymatrix{P}_{AB}^* \in \rA_\text{Split}$ and some $\omega_0 \in [0,1]$ such that $\myvect{x}^\intercal \mymatrix{M}_F^*(\mymatrix{P}_{AB}^*)\myvect{x} = \myvect{x}^\intercal \mymatrix{H}_\SCI(\omega_0)\myvect{x}$, then $g(\myvect{x}) \le \myvect{x}^\intercal \mymatrix{H}_\SCI(\omega_0)\myvect{x}$, so $g(\myvect{x}) = \myvect{x}^\intercal \mymatrix{H}_\SCI(\omega_0)\myvect{x}$. Therefore, let us prove for each case that there exists a $\mymatrix{P}_{AB}^* \in \rA_\text{Split}$ such that $\myvect{x}^\intercal \mymatrix{M}_F^*(\mymatrix{P}_{AB}^*)\myvect{x} = \myvect{x}^\intercal \mymatrix{H}_\SCI(\omega_0)\myvect{x}$ (with the appropriate $\omega_0$).
	
	Before starting the exhaustion of cases, notice that the covariance $\mymatrix{C}_F^*(\mymatrix{P}_{AB})$ can be expressed in the three following forms:
	\begin{subequations}\label{eq: Optimal covariances}
		\begin{align}\label{eq: Optimal covariance A}
			\mymatrix{C}_F^*(\mymatrix{P}_{AB}) &= \mymatrix{C}_A - (\mymatrix{C}_A - \mymatrix{P}_{AB})\mymatrix{R}^{-1}(\mymatrix{C}_A - \mymatrix{P}_{AB}^\intercal)\\
			&= \mymatrix{C}_B
			- (\mymatrix{C}_B - \mymatrix{P}_{AB}^\intercal)\mymatrix{R}^{-1}(\mymatrix{C}_B - \mymatrix{P}_{AB}) \label{eq: Optimal covariance B} \\
			&= \mymatrix{P}_{AB}
			+ (\mymatrix{C}_A - \mymatrix{P}_{AB})\mymatrix{R}^{-1}(\mymatrix{C}_B - \mymatrix{P}_{AB})\label{eq: Optimal covariance AB}
		\end{align}
	\end{subequations}
	where $\mymatrix{R}$ is given by \eqref{eq: R}.
	
	\case{Case 1} Assume $\myvect{x}^\intercal \mymatrix{H}_\SCI'(0) \myvect{x} < 0$, by definition of $\mymatrix{H}_\SCI'(0)$: $\myvect{x}^\intercal \mymatrix{P}_A^{-1} \myvect{x} < \myvect{x}^\intercal \mymatrix{B}(1)\mymatrix{P}_B\mymatrix{B}(1) \myvect{x}$. Then, consider the matrices:
	\begin{align*}
		\mymatrix{\Omega} &= \frac{\mymatrix{P}_A^{-1/2}\myvect{x} \myvect{x}^\intercal\mymatrix{B}(1)\mymatrix{P}_B^{1/2}}{\myvect{x}^\intercal\mymatrix{B}(1)\mymatrix{P}_B\mymatrix{B}(1)x}, & \mymatrix{P}_{AB}^* &= \mymatrix{P}_A^{1/2}\mymatrix{\Omega}\mymatrix{P}_B^{1/2}.
	\end{align*}
	The matrix $\mymatrix{\Omega}^\intercal\mymatrix{\Omega}$ has rank one: its only non-null eigenvalue $\lambda_1$ and the associated eigenvector $\myvect{z}_1$ are:
	\begin{align*}
		\lambda_1 &= \frac{\myvect{x}^\intercal\mymatrix{P}_A^{-1}\myvect{x}}{\myvect{x}^\intercal\mymatrix{B}(1)\mymatrix{P}_B\mymatrix{B}(1)\myvect{x}} < 1, & \myvect{z}_1 &= \mymatrix{P}_B^{1/2}\mymatrix{B}(1)\myvect{x}.
	\end{align*}
	Indeed by definition of $\mymatrix{\Omega}$, any vector $\myvect{y}$ orthogonal to $\myvect{z}_1$ satisfies $\mymatrix{\Omega}^\intercal\mymatrix{\Omega}\myvect{y} = \mymatrix{0}$.
	Thus, $\mymatrix{\Omega}^\intercal\mymatrix{\Omega} \preceq \mymatrix{I}$, and by Lemma~\ref{lem: Correlation matrices} the matrix $\mymatrix{P}_{AB}^* \in \rA_\text{Split}$. Let us finally prove that $\myvect{x}^\intercal \mymatrix{M}_F^*(\mymatrix{P}_{AB}^*)\myvect{x} = \myvect{x}^\intercal \mymatrix{H}_\SCI(0)\myvect{x}$. The Woodbury inverse formula applied on \eqref{eq: Optimal covariance B} gives:
	\begin{equation*}\label{eq: Optimal informations A}
		\mymatrix{M}_F^*(\mymatrix{P}_{AB}) = \mymatrix{B}(1)
		- (\mymatrix{I} - \mymatrix{B}(1)\mymatrix{P}_{AB}^\intercal)\mymatrix{R}_B^{-1}(\mymatrix{I} - \mymatrix{P}_{AB}\mymatrix{B}(1)),
	\end{equation*}
	where $\mymatrix{R}_B = \mymatrix{C}_A - \mymatrix{P}_{AB}\mymatrix{C}_B^{-1} \mymatrix{P}_{AB}^\intercal$. By construction, $\mymatrix{P}_{AB}^*\mymatrix{B}(1)\myvect{x} = \myvect{x}$, so $\myvect{x}^\intercal \mymatrix{M}_F^*(\mymatrix{P}_{AB}^*)\myvect{x} = \myvect{x}^\intercal \mymatrix{B}(1)\myvect{x}$ which concludes the proof for Case 1.
	
	\case{Case 2} The second case is symmetrical.
	
	\case{Case 3} The equality $\myvect{x}^\intercal \mymatrix{H}_\SCI'(\omega_0)\myvect{x} = 0$ develops into $\myvect{x}^\intercal\mymatrix{A}(\omega_0)\mymatrix{P}_A \mymatrix{A}(\omega_0)\myvect{x} = \myvect{x}^\intercal\mymatrix{B}(\bar\omega_0)\mymatrix{P}_B \mymatrix{B}(\bar\omega_0)\myvect{x}$. Let $\gamma = \myvect{x}^\intercal\mymatrix{A}(\omega_0)\mymatrix{P}_A \mymatrix{A}(\omega_0)\myvect{x}$ and consider the matrices:
	\begin{align*}
		 \mymatrix{\Omega} &= \frac{1}{\gamma}\mymatrix{P}_A^{1/2} \mymatrix{A}(\omega_0)\myvect{x}\myvect{x}^\intercal\mymatrix{B}(\bar\omega_0)\mymatrix{P}_B^{1/2},& \mymatrix{P}_{AB}^* &= \mymatrix{P}_A^{1/2}\mymatrix{\Omega}\mymatrix{P}_B^{1/2}.
	\end{align*}
	With the same argument as in Case 1, the matrix $\mymatrix{P}_{AB}^* \in \rA_\text{Split}$. Let us finally prove that $\myvect{x}^\intercal\mymatrix{H}_\SCI(\omega_0)\myvect{x} = \myvect{x}^\intercal\mymatrix{M}_F^*(\mymatrix{P}_{AB}^*)\myvect{x}$. Consider the product $\mymatrix{H}_\SCI(\omega_0)\mymatrix{C}_F^{**}\mymatrix{H}_\SCI(\omega_0)$ where $\mymatrix{C}_F^{**}$ abbreviates $\mymatrix{C}_F^*(\mymatrix{P}_{AB}^*)$. As $\mymatrix{H}_\SCI(\omega_0) = \omega_0\mymatrix{A}(\omega_0)+\bar\omega_0\mymatrix{B}(\bar\omega_0)$:
	\begin{multline*}
		\mymatrix{H}_\SCI(\omega_0)\mymatrix{C}_F^{**}\mymatrix{H}_\SCI(\omega_0) = \\
		\omega_0^2 \mymatrix{A}(\omega_0)\mymatrix{C}_F^{**}\mymatrix{A}(\omega_0) + \omega_0\bar\omega_0\mymatrix{A}(\omega_0)\mymatrix{C}_F^{**}\mymatrix{B}(\bar\omega_0) \\
		+\omega_0\bar\omega_0\mymatrix{B}(\bar\omega_0)\mymatrix{C}_F^{**}\mymatrix{A}(\omega_0)
		+\bar\omega_0^2\mymatrix{B}(\bar\omega_0)\mymatrix{C}_F^{**}\mymatrix{B}(\bar\omega_0).
	\end{multline*}
	Using the three expressions of $\mymatrix{C}_F^*(\mymatrix{P}_{AB})$ in \eqref{eq: Optimal covariances}, after some calculations we obtain:
	\begin{align*}
		\omega_0^2 \myvect{x}^\intercal\mymatrix{A}(\omega_0)\mymatrix{C}_F^{**}\mymatrix{A}(\omega_0)\myvect{x} &= \omega_0 \myvect{x}^\intercal\mymatrix{A}(\omega_0)\myvect{x} - \omega_0\bar\omega_0\gamma - C,\\
		\bar\omega_0^2 \myvect{x}^\intercal\mymatrix{B}(\bar\omega_0)\mymatrix{C}_F^{**}\mymatrix{B}(\bar\omega_0)\myvect{x} &= \bar\omega_0 \myvect{x}^\intercal\mymatrix{B}(\omega_0)\myvect{x} - \omega_0\bar\omega_0\gamma - C,\\
		\omega_0\bar\omega_0\myvect{x}^\intercal\mymatrix{A}(\omega_0)\mymatrix{C}_F^{**}\mymatrix{B}(\bar\omega_0)\myvect{x} &= \omega_0\bar\omega_0\gamma +C,
	\end{align*}
	with:
	\begin{multline*}
		C = \myvect{x}^\intercal(\mymatrix{I} - \bar\omega_0\mymatrix{A}(\omega_0)\mymatrix{P}_A - \omega_0\mymatrix{B}(\bar\omega_0)\mymatrix{P}_B)\mymatrix{R}^{-1}\\
		(\mymatrix{I} - \bar\omega_0\mymatrix{P}_A\mymatrix{A}(\omega_0) - \omega_0\mymatrix{P}_B\mymatrix{B}(\bar\omega_0))\myvect{x}.
	\end{multline*}
	The calculation is not detailed, however the key ideas are to note that $\omega_0 \mymatrix{A}(\omega_0)\mymatrix{C}_A = \mymatrix{I} - \bar\omega_0 \mymatrix{A}(\omega_0)\mymatrix{P}_A$ and similarly $\bar\omega_0 \mymatrix{B}(\bar\omega_0)\mymatrix{C}_B = \mymatrix{I} - \omega_0 \mymatrix{B}(\bar\omega_0)\mymatrix{P}_B$, and then to use the definition of $\mymatrix{P}_{AB}^*$ and $\gamma$.
	Thus,
	\begin{equation}\label{eq: (Pr) quadratic equality}
		\myvect{x}^\intercal\mymatrix{H}_\SCI(\omega_0)\mymatrix{C}_F^*(\mymatrix{P}_{AB}^*)\mymatrix{H}_\SCI(\omega_0)\myvect{x} = \myvect{x}^\intercal\mymatrix{H}_\SCI(\omega_0)\myvect{x}.
	\end{equation}
	Since SCI is conservative, $\mymatrix{C}_F^*(\mymatrix{P}_{AB}^*)\preceq \mymatrix{B}_\SCI(\omega_0)$, so $\mymatrix{H}_\SCI(\omega_0)\mymatrix{C}_F^*(\mymatrix{P}_{AB}^*)\mymatrix{H}_\SCI(\omega_0) \preceq \mymatrix{H}_\SCI(\omega_0)$. This inequality combined with \eqref{eq: (Pr) quadratic equality} gives:
	\begin{equation}\label{eq: (Pr) linear equality}
		\mymatrix{H}_\SCI(\omega_0)\mymatrix{C}_F^*(\mymatrix{P}_{AB}^*)\mymatrix{H}_\SCI(\omega_0)\myvect{x} = \mymatrix{H}_\SCI(\omega_0)\myvect{x}.
	\end{equation}
	Finally, by premultiplying \eqref{eq: (Pr) linear equality} by $\mymatrix{M}_F^*(\mymatrix{P}_{AB}^*)\mymatrix{B}_\SCI(\omega_0)$, $\mymatrix{H}_\SCI(\omega_0)\myvect{x} = \mymatrix{M}_F^*(\mymatrix{P}_{AB}^*)\myvect{x}$. Hence, $\myvect{x}^\intercal\mymatrix{H}_\SCI(\omega_0)\myvect{x} = \myvect{x}^\intercal\mymatrix{M}_F^*(\mymatrix{P}_{AB}^*)\myvect{x}$ which concludes the proof.
\end{proof}

The three cases of Theorem~\ref{the: Expression of g} are illustrated in Figure~\ref{fig: SCI tightness}. The condition $\myvect{x}^\intercal \mymatrix{H}_\SCI'(0) \myvect{x} < 0$ is equivalent to $\myvect{x}^\intercal \mymatrix{P}_A^{-1} \myvect{x} < \myvect{x}^\intercal \mymatrix{C}_B^{-1} \mymatrix{P}_B \mymatrix{C}_B^{-1} \myvect{x}$. As $\rE(\mymatrix{P}_A) \subset \rE(\mymatrix{C}_B\mymatrix{P}_B^{-1}\mymatrix{C}_B)$, Case 1 never occurs and $\rV^* \subset \rE(\mymatrix{H}_\SCI(0)) = \rE(\mymatrix{C}_B)$. This strict inclusion is highlighted in the zoom. Similarly, the condition $\myvect{x}^\intercal \mymatrix{H}_\SCI'(1) \myvect{x} > 0$ is equivalent to $\myvect{x}^\intercal \mymatrix{P}_B^{-1} \myvect{x} < \myvect{x}^\intercal \mymatrix{C}_A^{-1} \mymatrix{P}_A \mymatrix{C}_A^{-1} \myvect{x}$. This time, the ellipsoids $\rE(\mymatrix{P}_B)$ and $\rE(\mymatrix{C}_A\mymatrix{P}_A^{-1}\mymatrix{C}_A)$ do intersect (the intersections are highlighted by the lines). Between the intersections, Theorem~\ref{the: Expression of g} claims that $g(\myvect{x}) = \myvect{x}^\intercal\mymatrix{C}_A^{-1}\myvect{x}$ as observed. Finally, a particular vector $\myvect{z}$ realizing the third case is represented by a diamond. The ellipses associated to the matrices $\mymatrix{B}_\SCI(\omega_0)$ and $\mymatrix{C}_F^*(\mymatrix{P}_{AB}^*)$ are also plotted to illustrate the equality: $\myvect{z}^\intercal\mymatrix{H}_\SCI(\omega_0)\myvect{z} = \myvect{z}^\intercal\mymatrix{M}_F^*(\mymatrix{P}_{AB}^*)\myvect{z}$. 

Theorem~\ref{the: Expression of g} has a nice geometric interpretation. As SCI bounds are conservative, for all $\myvect{x}$ and all $\omega \in [0,1]$, $g(\myvect{x}) \ge \myvect{x}^\intercal\mymatrix{H}_\SCI(\omega)\myvect{x}$, so $g(\myvect{x}) \ge \max_{\omega}\myvect{x}^\intercal\mymatrix{H}_\SCI(\omega)\myvect{x}$. According to Theorem~\ref{the: Expression of g}, for all $\myvect{x}$, the bound $g(\myvect{x})$ is reached at some $\omega_0$. Consequently, $g$ can be re-expressed as:
\begin{align*}
	g(\myvect{x}) = \max_{\omega \in [0,1]} \myvect{x}^\intercal\mymatrix{H}_\SCI(\omega)\myvect{x}.
\end{align*}
Geometrically, the set $\rV^*$ can also be re-expressed as:
\begin{equation}
	\rV^* = \set{\myvect{x} \suchthat g(\myvect{x}) \le 1} = \bigcap_{\omega \in [0,1]} \rE(\mymatrix{B}_\SCI(\omega)).
\end{equation}
Thus, the minimal set $\rV^*$ is also characterized by the intersection of the ellipsoids induced by SCI bounds. As a consequence, $\rV^*$ is not only a volume common to all ellipsoids associated with conservative bounds, but it is also the largest volume common to these ellipsoids.

The common volume $\rV^*$ of the conservative bounds is now characterized by SCI bounds. The next section shows that SCI bounds generate also the smaller ellipsoids containing this volume.

\section{Tightness of SCI bounds over $\rV^*$}\label{sec: Tightness}

An ellipsoid $\rE(\mymatrix{P})$ is said to \emph{tightly circumscribe} $\rV^*$, if for any other ellipsoid $\rE(\mymatrix{Q})$, $\rV^* \subseteq \rE(\mymatrix{Q}) \subseteq \rE(\mymatrix{P})$ implies $\mymatrix{P} = \mymatrix{Q}$. In other words, there is no ellipsoid smaller than $\rE(\mymatrix{P})$ that contains $\rV^*$. In this section, we characterize the ellipsoids that tightly circumscribe the set $\rV^*$. The main result is the following theorem. Its proof is inspired by the proof of Kahan for the intersection of ellipsoids \cite{kahan1968circumscribing}. It is a proof by exhaustion whose cases have been adapted to SCI.

\begin{thm}\label{the: Tight inclusion}
	If $\rE(\mymatrix{B})$ tightly circumscribes $\rV^*$, then there exists $\omega_1 \in [0,1]$ such that $\mymatrix{B} = \mymatrix{B}_\SCI(\omega_1)$.
\end{thm}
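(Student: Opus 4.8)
The plan is to recast the geometric statement as an extremal problem over precision matrices and then identify its maximal elements. Writing $\mymatrix{H} = \mymatrix{B}^{-1}$ and using that $\rV^* \subseteq \rE(\mymatrix{B})$ is equivalent, by the homogeneity of $g$ established in Section~\ref{sec: Minimal volume}, to $\myvect{x}^\intercal \mymatrix{H} \myvect{x} \le g(\myvect{x})$ for all $\myvect{x}$, I would introduce the set of symmetric matrices
\[
	\mathcal{S} = \set{\mymatrix{H}' \suchthat \myvect{x}^\intercal \mymatrix{H}' \myvect{x} \le g(\myvect{x}) \ \forall \myvect{x} \in \R^n}.
\]
Each defining constraint is linear in $\mymatrix{H}'$, so $\mathcal{S}$ is closed, convex, and downward closed for the Loewner order. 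The property that $\rE(\mymatrix{B})$ tightly circumscribes $\rV^*$ then translates exactly into: $\mymatrix{H}$ is a maximal element of $\mathcal{S}$ for the Loewner order. Since $g(\myvect{x}) = \max_{\omega} \myvect{x}^\intercal \mymatrix{H}_\SCI(\omega) \myvect{x}$ (Theorem~\ref{the: Expression of g}), every $\mymatrix{H}_\SCI(\omega) \in \mathcal{S}$; and by the matrix concavity furnished by Lemma~\ref{lem: Concavity}, any convex combination satisfies $\sum_j \mu_j \mymatrix{H}_\SCI(\omega_j) \preceq \mymatrix{H}_\SCI(\sum_j \mu_j \omega_j)$, so the arc $\set{\mymatrix{H}_\SCI(\omega) \suchthat \omega \in [0,1]}$ is precisely the set of maximal elements of $\mathrm{conv}\set{\mymatrix{H}_\SCI(\omega) \suchthat \omega\in[0,1]}$. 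Thus the theorem reduces to showing that every maximal element of $\mathcal{S}$ lies on this arc.

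Next I would extract the contact structure of a maximal $\mymatrix{H}$. Because $\mathcal{S}$ is closed convex and $\mymatrix{H}$ is maximal for the self-dual PSD cone, a separation argument yields a nonzero $\mymatrix{W} \succeq 0$ with $\mymatrix{H} \in \arg\max_{\mymatrix{H}' \in \mathcal{S}} \trace(\mymatrix{W} \mymatrix{H}')$. Complementary slackness for this semi-infinite linear program, together with Carathéodory's theorem, produces finitely many \emph{contact directions} $\myvect{x}_1, \dots, \myvect{x}_m$ satisfying $\myvect{x}_i^\intercal \mymatrix{H} \myvect{x}_i = g(\myvect{x}_i)$ and weights $\lambda_i \ge 0$ with $\mymatrix{W} = \sum_i \lambda_i \myvect{x}_i \myvect{x}_i^\intercal$. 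At each contact direction the ellipsoid $\rE(\mymatrix{B})$ supports $\rV^*$; since $g$ is differentiable at $\myvect{x}_i$ with $\nabla g(\myvect{x}_i) = 2 \mymatrix{H}_\SCI(\omega_i) \myvect{x}_i$ (the envelope theorem applied to the parameter $\omega_i = \omega_0(\myvect{x}_i)$ selected by Theorem~\ref{the: Expression of g}, whether interior or at an endpoint), equating the outward normals of $\rE(\mymatrix{B})$ and $\rV^*$ at the shared boundary point and matching the quadratic values gives the vector identities $\mymatrix{H} \myvect{x}_i = \mymatrix{H}_\SCI(\omega_i) \myvect{x}_i$ for every $i$.

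The final step is to collapse the parameters $\omega_i$ to a single value and to upgrade these subspace identities to a matrix equality, and here I would follow Kahan's exhaustion \cite{kahan1968circumscribing} adapted to the trichotomy of Theorem~\ref{the: Expression of g}. The function $\phi(\omega) = \trace(\mymatrix{W} \mymatrix{H}_\SCI(\omega)) = \sum_i \lambda_i \myvect{x}_i^\intercal \mymatrix{H}_\SCI(\omega) \myvect{x}_i$ is strictly concave by Lemma~\ref{lem: Concavity}, hence has a unique maximizer $\omega_1 \in [0,1]$; the plan is to show, case by case according to whether $\omega_1$ is interior or an endpoint, that optimality of $\mymatrix{H}$ forces every contact direction to select the same parameter $\omega_i = \omega_1$. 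Granting this, $\mymatrix{E} = \mymatrix{H} - \mymatrix{H}_\SCI(\omega_1)$ annihilates each $\myvect{x}_i$, so $\mathrm{range}(\mymatrix{W}) \subseteq \ker \mymatrix{E}$, and a further use of maximality in the directions of $\ker \mymatrix{W}$ eliminates any remaining discrepancy, yielding $\mymatrix{H} = \mymatrix{H}_\SCI(\omega_1)$, i.e. $\mymatrix{B} = \mymatrix{B}_\SCI(\omega_1)$.

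The step I expect to be the main obstacle is exactly this last collapse. Proving that the contact parameters coincide and that $\mymatrix{W}$ ``sees'' enough directions is in essence a higher-rank S-procedure: the separating functional $\mymatrix{W}$ need not be rank one, so the rank-one constraints defining $\mathcal{S}$ do not a priori certify containment the way a single quadratic would, and a genuine duality gap could open for degenerate contact configurations. Kahan's contribution is precisely a case analysis that closes this gap for intersections of ellipsoids; the work here is to verify that the strict concavity of the SCI arc (Lemma~\ref{lem: Concavity}) and the three cases of Theorem~\ref{the: Expression of g} make each of his cases go through for $\rV^*$, in particular handling rank-deficient $\mymatrix{W}$ either by a perturbation argument or by treating the degenerate contact configurations directly.
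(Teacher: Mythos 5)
Your convex-duality reformulation is sound as far as it goes: tightness of $\rE(\mymatrix{B})$ is indeed equivalent to Loewner-maximality of $\mymatrix{H}=\mymatrix{B}^{-1}$ in the set $\mathcal{S}$, the separation of $\mathcal{S}$ from $\mymatrix{H}+\mathrm{int}(\mathrm{PSD})$ does produce a nonzero $\mymatrix{W}\succeq 0$ supported by $\mymatrix{H}$, and the first-order contact identity $\mymatrix{H}\myvect{x}_i=\mymatrix{H}_\SCI(\omega_i)\myvect{x}_i$ is legitimate (Danskin applies because the maximizer $\omega_0(\myvect{x})$ of $\omega\mapsto\myvect{x}^\intercal\mymatrix{H}_\SCI(\omega)\myvect{x}$ is unique by Lemma~\ref{lem: Concavity}, including at endpoints). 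That identity is exactly what the paper establishes for its single contact direction $\myvect{z}$ at the start of the proof of Lemma~\ref{lem: (Pr) Main lemma}. But your proof stops where the theorem actually lives, and you say so yourself: the "collapse" step is stated as a plan, not an argument.

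The gap is that first-order contact constrains $\mymatrix{H}$ only on $\mathrm{span}\set{\myvect{x}_i}\supseteq\mathrm{range}(\mymatrix{W})$, and nothing forces $\mymatrix{W}$ to have high rank: generically the contact set is a single direction, $m=1$. Then $\mymatrix{E}=\mymatrix{H}-\mymatrix{H}_\SCI(\omega_1)$ annihilates one vector and is otherwise unconstrained by everything you have derived — it could have a positive eigenvalue on $\ker\mymatrix{W}$ without violating any stated complementary-slackness or normal-matching relation, since the constraints of $\mathcal{S}$ in directions off the contact set are inactive and carry no multipliers. Ruling this out requires second-order information about $g$ near the contact direction, and that is the entire content of the paper's proof: for every direction $\myvect{y}$ it builds $\myvect{x}(\lambda)=\eta\myvect{z}+\lambda\myvect{y}$ along which $g$ agrees with the quadratic form of $\mymatrix{H}_\SCI(\omega_0)$ either exactly (Cases 1, 2, 3.1, using Theorem~\ref{the: Expression of g} after a perturbation or a projection choice of $\eta$) or up to $o(\lambda^2)$ (Case 3.2, via the implicit function theorem applied to $\xi(\lambda,\chi)=\myvect{x}(\lambda)^\intercal\mymatrix{H}_\SCI'(\chi)\myvect{x}(\lambda)$, which hinges on $\myvect{z}^\intercal\mymatrix{H}_\SCI''(\omega_0)\myvect{z}<0$ from Lemma~\ref{lem: Concavity}), and Lemma~\ref{lem: (Pr) Main lemma} then converts that agreement into $\myvect{y}^\intercal(\mymatrix{H}_\SCI(\omega_0)-\mymatrix{H})\myvect{y}\ge 0$ for all $\myvect{y}$, i.e. $\mymatrix{H}\preceq\mymatrix{H}_\SCI(\omega_0)$, after which tightness forces equality. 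Nothing in your proposal substitutes for this analysis; in particular the claim that all contact parameters satisfy $\omega_i=\omega_1$ is a consequence of the theorem, not a stepping stone you have secured. One concrete trap awaiting any executed version of your plan: when $\omega_0\in\set{0,1}$ the implicit branch $\chi(\lambda)$ may exit $[0,1]$, so Theorem~\ref{the: Expression of g} no longer identifies $g(\myvect{x}(\lambda))$ with $\myvect{x}(\lambda)^\intercal\mymatrix{H}_\SCI(\chi(\lambda))\myvect{x}(\lambda)$; the paper needs a separate sign argument there, showing $\myvect{x}(\lambda)^\intercal\mymatrix{H}_\SCI'(0)\myvect{x}(\lambda)<0$ for suitable $\lambda\ne 0$ so as to fall back on Case 1, and a duality-side proof would need an analogous boundary treatment that your sketch does not anticipate.
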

\begin{proof}
	Let $\mymatrix{B}$ be a bound whose ellipsoid tightly circumscribes $\rV^*$ and denote $\mymatrix{H} = \mymatrix{B}^{-1}$ its precision matrix. Let:
	\begin{equation*}
		\phi = \min_{\myvect{x}\ne 0} \frac{g(\myvect{x})}{\myvect{x}^\intercal \mymatrix{H}\myvect{x}}.
	\end{equation*}
	By using the definition of $g$:
	\begin{equation*}
		\phi = \min_{\myvect{x}\ne 0} \min_{\mymatrix{P}_{AB}\in \rA_\text{Split}} \frac{\myvect{x}^\intercal \mymatrix{M}_F^*(\mymatrix{P}_{AB})\myvect{x}}{\myvect{x}^\intercal \mymatrix{H}\myvect{x}}.
	\end{equation*}
	As $\phi$ is the minimum over a compact (\eg{} $\myvect{x}^\intercal \mymatrix{H}\myvect{x} = 1$), it is achieved at some vector $\myvect{z}$ and some cross-covariance $\mymatrix{P}_{AB}^*$. Furthermore, by construction $\phi \ge 1$, and $\phi = 1$ otherwise, $\rE(\frac{1}{\phi}\mymatrix{B})$ would be a smaller ellipsoid than $\rE(\mymatrix{B})$ containing $\rV^*$. By Theorem~\ref{the: Expression of g}, there exists a unique $\omega_0 \in [0,1]$ such that $g(\myvect{z}) = \myvect{z}^\intercal \mymatrix{H}_\SCI(\omega_0)\myvect{z}$. Assume for the time being, that the following property is true for every $\myvect{y} \in \R^n$:
	\begin{align}\label{eq: (Pr) Inclusion SCI in bound}\tag{$\rC_y$}
		\myvect{y}^\intercal(\mymatrix{H}_\SCI(\omega_0) - \mymatrix{H})\myvect{y} \ge 0.
	\end{align}
	This is equivalent to $\mymatrix{H} \preceq \mymatrix{H}_\SCI(\omega_0)$. Then, as SCI bounds are conservative, $\rV^* \subseteq \rE(\mymatrix{B}_\SCI(\omega_0))$, and the tightness implies that $\mymatrix{B} = \mymatrix{B}_\SCI(\omega_0)$ which concludes the proof. To prove Theorem~\ref{the: Tight inclusion}, let us prove that \eqref{eq: (Pr) Inclusion SCI in bound} holds for all $\myvect{y} \in \R^n$.
	
	The following result, whose proof is also given in the appendix, is used several times in the sequel. 
	\begin{lem}\label{lem: (Pr) Main lemma}
		Let $\myvect{y} \in \R^n$, $\eta \in \R$, and define for any $\lambda \in \R$, $\myvect{x}(\lambda) = \eta \myvect{z} + \lambda \myvect{y}$. If one of the two following statements is true, 
		\begin{enumerate}
			\item $g(\myvect{x}(\lambda)) = \myvect{x}(\lambda)^\intercal\mymatrix{H}_\SCI(\omega_0) \myvect{x}(\lambda)$, for some $\lambda \ne 0$;
			\item $g(\myvect{x}(\lambda)) = \myvect{x}(\lambda)^\intercal\mymatrix{H}_\SCI(\omega_0) \myvect{x}(\lambda) + o(\lambda^2)$;
		\end{enumerate} 
		then \eqref{eq: (Pr) Inclusion SCI in bound} holds for $\myvect{y}$.
	\end{lem}
	There are three cases to consider to prove Theorem~\ref{the: Tight inclusion}, these are the same as in Theorem~\ref{the: Expression of g}.
	
	\case{Case 1} Assume $\myvect{z}^\intercal \mymatrix{H}_\SCI'(0)\myvect{z} < 0$. In this case, $\omega_0 = 0$, by Theorem~\ref{the: Expression of g}. Let $\myvect{y} \in \R^n$ be set and let us prove \eqref{eq: (Pr) Inclusion SCI in bound}. By continuity of the function $\myvect{x} \mapsto \myvect{x}^\intercal \mymatrix{H}_\SCI'(0)\myvect{x}$, if $\myvect{z}$ is slightly perturbed in the direction $\myvect{y}$, the inequality $\myvect{x}^\intercal\mymatrix{H}_\SCI'(0)\myvect{x} < 0$ still holds. Formally, there exists $\lambda > 0$ such that the vector $\myvect{x} = \myvect{z} + \lambda \myvect{y}$ also satisfies:
	\begin{equation*}
		\myvect{x}^\intercal \mymatrix{H}_\SCI'(0)\myvect{x} < 0.
	\end{equation*}
	Then, Theorem~\ref{the: Expression of g} gives $g(\myvect{x}) = \myvect{x}^\intercal \mymatrix{H}_\SCI(0)\myvect{x}$. Hence, by applying Lemma~\ref{lem: (Pr) Main lemma}, \eqref{eq: (Pr) Inclusion SCI in bound} holds. Thus, for all $\myvect{y} \in \R^n$, \eqref{eq: (Pr) Inclusion SCI in bound} holds, which concludes the proof for Case 1.
	
	\case{Case 2} Assume $\myvect{z}^\intercal \mymatrix{H}_\SCI'(1)\myvect{z} > 0$. In this case, $\omega_0 = 1$, by Theorem~\ref{the: Expression of g}. This case is symmetrical with Case 1.
	
	\case{Case 3}
	Assume $\myvect{z}^\intercal \mymatrix{H}_\SCI'(0)\myvect{z} \ge 0$ and $\myvect{z}^\intercal \mymatrix{H}_\SCI'(1)\myvect{z} \le 0$. In this case, by Theorem~\ref{the: Expression of g}, $\myvect{z}^\intercal \mymatrix{H}_\SCI'(\omega_0)\myvect{z} = 0$. There are two sub-cases to consider depending on whether $\mymatrix{H}_\SCI'(\omega_0)\myvect{z} = 0$ or not. They are the adaptations of Cases 2 and 3 in the proof of Kahan \cite{kahan1968circumscribing}.
	If $\mymatrix{H}_\SCI'(\omega_0)\myvect{z} \ne 0$, we can project \emph{almost} every $\myvect{y}$ to create a vector $\myvect{x}$ that satisfies the first assumption of Lemma~\ref{lem: (Pr) Main lemma}. If $\mymatrix{H}_\SCI'(\omega_0)\myvect{z} = 0$, we cannot, but in this case $g$ and the function $\myvect{x} \mapsto \myvect{x}^\intercal\mymatrix{H}_\SCI(\omega_0)\myvect{x}$ \emph{coincide} in a neighborhood of $\myvect{z}$ and we can apply the second case of Lemma~\ref{lem: (Pr) Main lemma}.
	
	\case{Case 3.1} Assume that $\mymatrix{H}_\SCI'(\omega_0) \myvect{z} \ne \myvect{0}$. Let $\myvect{y} \in \R^n$ such that $\myvect{y}^\intercal \mymatrix{H}_\SCI'(\omega_0)\myvect{z} \ne 0$. Define:
	\begin{equation*}
		\eta = -\frac{1}{2}\frac{\myvect{y}^\intercal \mymatrix{H}_\SCI'(\omega_0)\myvect{y}}{\myvect{y}^\intercal \mymatrix{H}_\SCI'(\omega_0)\myvect{z}},
	\end{equation*}
	so that the vector $\myvect{x} = \eta\myvect{z} + \myvect{y}$ satisfies $\myvect{x}^\intercal \mymatrix{H}_\SCI'(\omega_0)\myvect{x} = 0$. By Theorem~\ref{the: Expression of g}, $g(\myvect{x}) = \myvect{x}^\intercal \mymatrix{H}_\SCI(\omega_0)\myvect{x}$, then by Lemma~\ref{lem: (Pr) Main lemma}, \eqref{eq: (Pr) Inclusion SCI in bound} holds for $\myvect{y}$. Thus, \eqref{eq: (Pr) Inclusion SCI in bound} holds for all $\myvect{y}$ except for those on the hyperplane $\set{\myvect{y} \suchthat \myvect{y}^\intercal\mymatrix{H}_\SCI'(\omega_0)\myvect{z} = 0}$, by continuity of the function $\myvect{x} \mapsto \myvect{x}^\intercal(\mymatrix{H}_\SCI(\omega_0) - \mymatrix{H})\myvect{x}$, \eqref{eq: (Pr) Inclusion SCI in bound} holds for all $\myvect{y} \in \R^n$. This concludes the proof of Case 3.1.
	
	\case{Case 3.2} Assume that $\mymatrix{H}_\SCI'(\omega_0) \myvect{z} = \myvect{0}$. Let $\myvect{y} \in \R^n$ be set, and define for any $\lambda \in \R$ the vector $\myvect{x}(\lambda) = \myvect{z} + \lambda \myvect{y}$.
	Then, consider the function:
	\begin{equation*}
		\xi : (\lambda, \chi) \mapsto \myvect{x}(\lambda)^\intercal \mymatrix{H}_\SCI'(\chi)\myvect{x}(\lambda).
	\end{equation*}
	It is regular, satisfies $\xi(0, \omega_0) = 0$, and Lemma~\ref{lem: Concavity} gives that $\frac{\partial \xi}{\partial \chi}(0,\omega_0)= \myvect{z}^\intercal \mymatrix{H}_\SCI''(\omega_0)\myvect{z} < 0$. Then, the Implicit Function Theorem, see \eg{} \cite[Theorem 1.3.1]{krantz2002implicit}, states that there exists a continuous and differentiable function $\chi: \lambda \mapsto \chi(\lambda)$ defined on some neighborhood of $\lambda = 0$ such that for all $\lambda$ in that neighborhood:
	\begin{align*}
		\xi(\lambda, \chi(\lambda)) &= \xi(0, \omega_0) = 0, & \chi'(\lambda) = - \frac{\frac{\partial \xi}{\partial \lambda}(\lambda, \chi(\lambda))}{\frac{\partial \xi}{\partial \chi}(\lambda, \chi(\lambda))}.
	\end{align*}
	If $\chi(\lambda) \in [0,1]$, Theorem~\ref{the: Expression of g} implies that $g(\myvect{x}(\lambda)) = \myvect{x}(\lambda)^\intercal\mymatrix{H}_\SCI(\chi(\lambda))\myvect{x}(\lambda)$.
	Let us therefore consider the function:
	\begin{equation*}
		g_y : \lambda \mapsto \myvect{x}(\lambda)^\intercal \mymatrix{H}_\SCI(\chi(\lambda))\myvect{x}(\lambda).
	\end{equation*}	
	This function is twice differentiable at $0$, and using the fact that the derivative of $\chi$ at $\lambda = 0$ is:
	\begin{equation*}
		\chi'(0) = - \frac{\frac{\partial \xi}{\partial \lambda}(0, \omega_0)}{\frac{\partial \xi}{\partial \chi}(0, \omega_0)} =-\frac{2\myvect{y}^\intercal\mymatrix{H}_\SCI'(\omega)\myvect{z}}{\myvect{z}^\intercal\mymatrix{H}_\SCI''(\omega)\myvect{z}} = 0,
	\end{equation*}
	we verify that:
	\begin{align*}
		g_y(0) &= \myvect{z}^\intercal \mymatrix{H}_\SCI(\omega_0)\myvect{z}, & g_y'(0) &= 2\myvect{z}^\intercal \mymatrix{H}_\SCI(\omega_0)\myvect{y},\\
		g_y''(0) &= 2\myvect{y}^\intercal \mymatrix{H}_\SCI(\omega_0)\myvect{y}.
	\end{align*}
	Therefore, the series expansion of $g_y$ at $\lambda = 0$ is:
	\begin{align*}
		g_y(\lambda) &= (\myvect{z} + \lambda \myvect{y})^\intercal \mymatrix{H}_\SCI(\omega_0)(\myvect{z} + \lambda \myvect{y}) + o(\lambda^2),\\
		&= \myvect{x}(\lambda)^\intercal\mymatrix{H}_\SCI(\omega_0)\myvect{x}(\lambda) + o(\lambda^2). 
	\end{align*}
	Assume for the time being that there exists some $\varepsilon > 0$, such that $\forall \lambda \in [-\varepsilon, \varepsilon]$, $\chi(\lambda) \in [0,1]$. Then, by Theorem~\ref{the: Expression of g} on that neighborhood: 
	\begin{align*}
		g(\myvect{x}(\lambda)) &= \myvect{x}(\lambda)\mymatrix{H}_\SCI(\chi(\lambda))\myvect{x}(\lambda) \\
		&= \myvect{x}(\lambda)\mymatrix{H}_\SCI(\omega_0)\myvect{x}(\lambda) + o(\lambda^2).
	\end{align*}
	Thus, by Lemma~\ref{lem: (Pr) Main lemma}, \eqref{eq: (Pr) Inclusion SCI in bound} holds.
	On the other hand, for all $\varepsilon$, there exists $\lambda \in [-\varepsilon, \varepsilon]$ such that $\chi(\lambda) \notin [0,1]$, then necessarily, $\omega_0 = \chi(0) \in \set{0,1}$. Let us assume for example that $\omega_0 = 0$. In that case, for all $\varepsilon$ small enough, there exist $\lambda \in [-\varepsilon, \varepsilon]$, such that $\chi(\lambda) < 0$. Finally, note that:
	\begin{align*}
		\myvect{x}(\lambda)^\intercal \mymatrix{H}_\SCI'(0) \myvect{x}(\lambda) &= \myvect{x}(\lambda)^\intercal [\mymatrix{H}_\SCI'(0) - \mymatrix{H}_\SCI'(\chi(\lambda))] \myvect{x}(\lambda),\\
		&= - \chi(\lambda) \myvect{z}^\intercal\mymatrix{H}_\SCI''(0)\myvect{z} + o( \chi(\lambda)).
	\end{align*}
	As Lemma~\ref{lem: Concavity} implies that $\myvect{z}^\intercal\mymatrix{H}_\SCI''(0)\myvect{z} < 0$,
	for $\varepsilon$ small enough, there exists $\lambda \ne 0$ such that $\myvect{x}(\lambda)^\intercal\mymatrix{H}_\SCI'(0)\myvect{x}(\lambda) < 0$. By Theorem~\ref{the: Expression of g}, $g(\myvect{x}(\lambda)) = \myvect{x}(\lambda)^\intercal\mymatrix{H}_\SCI(0)\myvect{x}(\lambda)$, and Lemma~\ref{lem: (Pr) Main lemma} gives that \eqref{eq: (Pr) Inclusion SCI in bound} holds. The proof of Case 3 is complete.
\end{proof}
The reverse is not always true: a SCI bound may not be tight. An example has already been seen in Figure~\ref{fig: SCI tightness}: the SCI bound for $\omega = 0$ corresponding to $\mymatrix{C}_B$ is not tight. However, if $\rE(\mymatrix{B}_\SCI(\omega))$ \emph{touches} $\rV^*$, then the bound is tight as claimed in the following theorem.
\begin{thm}\label{the: Condition for tightness}
	Let $\omega_1 \in [0,1]$, $\rE(\mymatrix{B}_\SCI(\omega_1))$ tightly circumscribes $\rV^*$ if and only if there exists $\myvect{x} \ne 0$ such that $g(\myvect{x}) = \myvect{x}^\intercal\mymatrix{H}_\SCI(\omega_1)\myvect{x}$.
\end{thm}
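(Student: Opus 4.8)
The statement is an equivalence whose two directions have very different flavors. The plan is to recycle the $\phi$-argument from the proof of Theorem~\ref{the: Tight inclusion} for the easy direction, and to isolate the core inequality proved there for the hard one. Throughout I use the characterization $g(\myvect{x}) = \max_{\omega\in[0,1]}\myvect{x}^\intercal\mymatrix{H}_\SCI(\omega)\myvect{x}$ and $\rV^* = \set{\myvect{x}: g(\myvect{x})\le 1}$ established after Theorem~\ref{the: Expression of g}.

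For the implication ``tight $\Rightarrow$ touching point,'' I would argue by contraposition. Suppose no $\myvect{x}\ne\myvect{0}$ satisfies $g(\myvect{x}) = \myvect{x}^\intercal\mymatrix{H}_\SCI(\omega_1)\myvect{x}$; since $g(\myvect{x})\ge\myvect{x}^\intercal\mymatrix{H}_\SCI(\omega_1)\myvect{x}$ always, this forces $g(\myvect{x}) > \myvect{x}^\intercal\mymatrix{H}_\SCI(\omega_1)\myvect{x}$ for every $\myvect{x}\ne\myvect{0}$. The ratio $g(\myvect{x})/(\myvect{x}^\intercal\mymatrix{H}_\SCI(\omega_1)\myvect{x})$ is $0$-homogeneous and continuous, hence attains a minimum $\phi>1$ on the compact set $\set{\myvect{x}: \myvect{x}^\intercal\mymatrix{H}_\SCI(\omega_1)\myvect{x} = 1}$. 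Then $\myvect{x}^\intercal(\phi\mymatrix{H}_\SCI(\omega_1))\myvect{x}\le g(\myvect{x})$ for all $\myvect{x}$, so $\rV^*\subseteq\rE(\tfrac1\phi\mymatrix{B}_\SCI(\omega_1))$, while $\tfrac1\phi\mymatrix{B}_\SCI(\omega_1)\prec\mymatrix{B}_\SCI(\omega_1)$ exhibits a strictly smaller circumscribing ellipsoid. Hence $\rE(\mymatrix{B}_\SCI(\omega_1))$ is not tight, which is the contrapositive.

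For the converse ``touching point $\Rightarrow$ tight,'' suppose $g(\myvect{x}_0) = \myvect{x}_0^\intercal\mymatrix{H}_\SCI(\omega_1)\myvect{x}_0$ for some $\myvect{x}_0\ne\myvect{0}$, and let $\mymatrix{Q}$ be any matrix with $\rV^*\subseteq\rE(\mymatrix{Q})\subseteq\rE(\mymatrix{B}_\SCI(\omega_1))$; write $\mymatrix{H} = \mymatrix{Q}^{-1}$. The inclusion $\rE(\mymatrix{Q})\subseteq\rE(\mymatrix{B}_\SCI(\omega_1))$ gives $\mymatrix{H}_\SCI(\omega_1)\preceq\mymatrix{H}$, while $\rV^*\subseteq\rE(\mymatrix{Q})$ gives $\myvect{x}^\intercal\mymatrix{H}\myvect{x}\le g(\myvect{x})$ for all $\myvect{x}$ (using $\rV^*=\set{\myvect{x}: g(\myvect{x})\le1}$ and homogeneity). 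Evaluating the chain $\myvect{x}_0^\intercal\mymatrix{H}_\SCI(\omega_1)\myvect{x}_0\le\myvect{x}_0^\intercal\mymatrix{H}\myvect{x}_0\le g(\myvect{x}_0)=\myvect{x}_0^\intercal\mymatrix{H}_\SCI(\omega_1)\myvect{x}_0$ forces all three equal, so $g(\myvect{x}_0)=\myvect{x}_0^\intercal\mymatrix{H}\myvect{x}_0$. By Lemma~\ref{lem: Concavity} the map $\omega\mapsto\myvect{x}_0^\intercal\mymatrix{H}_\SCI(\omega)\myvect{x}_0$ is strictly concave, so $\omega_1$ is its unique maximizer and is exactly the $\omega_0$ produced by Theorem~\ref{the: Expression of g} at $\myvect{x}_0$. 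The key point is that the proof of Theorem~\ref{the: Tight inclusion} establishes \eqref{eq: (Pr) Inclusion SCI in bound}, i.e. $\mymatrix{H}\preceq\mymatrix{H}_\SCI(\omega_1)$, using only the two facts just verified — that $\myvect{x}^\intercal\mymatrix{H}\myvect{x}\le g(\myvect{x})$ everywhere and that equality is attained at some $\myvect{z}=\myvect{x}_0$ with maximizer $\omega_1$ — and never the tightness of $\mymatrix{Q}$ (tightness entered only the very last line there). Re-running that case analysis through Lemma~\ref{lem: (Pr) Main lemma} with $\myvect{z}=\myvect{x}_0$ therefore yields $\mymatrix{H}\preceq\mymatrix{H}_\SCI(\omega_1)$, and together with $\mymatrix{H}_\SCI(\omega_1)\preceq\mymatrix{H}$ this gives $\mymatrix{H}=\mymatrix{H}_\SCI(\omega_1)$, i.e. $\mymatrix{Q}=\mymatrix{B}_\SCI(\omega_1)$, which is tightness.

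The main obstacle is precisely this structural point: I must confirm that the argument of Theorem~\ref{the: Tight inclusion} factors through the weaker hypothesis ``$\rV^*\subseteq\rE(\mymatrix{B})$ plus a single contact direction $\myvect{z}$'' rather than full tightness. Inspecting that proof, $\phi$ equals $1$ there exactly because a minimizing $\myvect{z}$ exists, and Cases 1, 2, 3.1 and 3.2 merely propagate \eqref{eq: (Pr) Inclusion SCI in bound} outward from that $\myvect{z}$; in the present setting the contact point $\myvect{x}_0$ supplies this $\myvect{z}$ with $\phi=1$ directly. If one prefers not to dissect a prior proof, the clean alternative is to extract its content as a standalone lemma — ``if $\myvect{x}^\intercal\mymatrix{H}\myvect{x}\le g(\myvect{x})$ for all $\myvect{x}$ and equality holds at some $\myvect{z}$ with $\SCI$-maximizer $\omega_0$, then $\mymatrix{H}\preceq\mymatrix{H}_\SCI(\omega_0)$'' — and cite it in both Theorem~\ref{the: Tight inclusion} and here, which keeps the logical dependency transparent.
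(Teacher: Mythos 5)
Your proof is correct, and while your forward direction coincides exactly with the paper's (the same contraposition via the homogeneous ratio $\phi=\min_{\myvect{x}\ne 0} g(\myvect{x})/\myvect{x}^\intercal\mymatrix{H}_\SCI(\omega_1)\myvect{x}$, with $\rE(\tfrac{1}{\phi}\mymatrix{B}_\SCI(\omega_1))$ as the strictly smaller witness), your converse takes a genuinely different route. The paper argues: given $\rV^*\subseteq\rE(\mymatrix{B})\subseteq\rE(\mymatrix{B}_\SCI(\omega_1))$, replace $\rE(\mymatrix{B})$ by a tight ellipsoid, invoke Theorem~\ref{the: Tight inclusion} as a black box to identify it as some $\mymatrix{B}_\SCI(\omega_2)$, and then run precisely your strict-concavity step (Lemma~\ref{lem: Concavity}: $h(\omega)=\myvect{x}^\intercal\mymatrix{H}_\SCI(\omega)\myvect{x}$ attains its maximum $g(\myvect{x})$ exactly once, and both $\omega_1$ and $\omega_2$ attain it) to force $\omega_2=\omega_1$. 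You instead keep the intermediate ellipsoid $\mymatrix{Q}$ arbitrary, transfer the contact direction $\myvect{x}_0$ to $\mymatrix{Q}$ via the sandwich of quadratic forms, and re-derive \eqref{eq: (Pr) Inclusion SCI in bound}, i.e.\ $\mymatrix{H}\preceq\mymatrix{H}_\SCI(\omega_1)$, by observing that the case analysis in the proof of Theorem~\ref{the: Tight inclusion} never uses tightness. That factorization claim does check out on inspection: Lemma~\ref{lem: (Pr) Main lemma} needs only $\mymatrix{H}\preceq\mymatrix{M}_F^*(\mymatrix{P}_{AB})$ for all admissible $\mymatrix{P}_{AB}$ (equivalent to $\rV^*\subseteq\rE(\mymatrix{Q})$) plus the equality $g(\myvect{z})=\myvect{z}^\intercal\mymatrix{H}\myvect{z}$ at the minimizing pair $(\myvect{z},\mymatrix{P}_{AB}^*)$, and Cases 1, 2, 3.1, 3.2 use only Theorems~\ref{the: Expression of g} and that lemma; tightness enters the original proof solely to get $\phi=1$ and in the final identification, and your $\myvect{x}_0$ supplies $\phi=1$ directly. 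Each route buys something: the paper's is shorter, but its opening move ``we can choose $\rE(\mymatrix{B})$ tight'' silently assumes the existence of a tight ellipsoid inside every ellipsoid containing $\rV^*$ — a compactness fact the paper never establishes — whereas your argument bypasses that existence claim entirely, at the cost of dissecting an earlier proof. Your closing suggestion removes even that cost: extracting the standalone lemma ``if $\myvect{x}^\intercal\mymatrix{H}\myvect{x}\le g(\myvect{x})$ for all $\myvect{x}$ and equality holds at some $\myvect{z}\ne\myvect{0}$ with associated maximizer $\omega_0$, then $\mymatrix{H}\preceq\mymatrix{H}_\SCI(\omega_0)$'' would serve both Theorem~\ref{the: Tight inclusion} (where the $\phi$-argument supplies $\myvect{z}$) and the present theorem, and would make the logical dependencies cleaner than in the published proof.
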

\begin{proof}
	As SCI bounds are conservative, for all $\myvect{x}$, $\myvect{x}^\intercal\mymatrix{H}_\SCI(\omega_1)\myvect{x} \le g(\myvect{x})$. If $\forall \myvect{x} \ne 0$, $\myvect{x}^\intercal\mymatrix{H}_\SCI(\omega_1)\myvect{x} < g(\myvect{x})$, consider $\phi = \min_{\myvect{x} \ne 0} g(\myvect{x})/\myvect{x}^\intercal\mymatrix{H}_\SCI(\omega_1)\myvect{x}$. As $\phi$ is the minimum over a compact, it is reached at some vector $\myvect{z} \ne 0$. Since $\myvect{z}^\intercal\mymatrix{H}_\SCI(\omega)\myvect{z} < g(\myvect{z})$, $\phi > 1$. Then, it can be verified that $\rV^* \subseteq \rE(\frac{1}{\phi}\mymatrix{B}_\SCI(\omega_1)) \subseteq \rE(\mymatrix{B}_\SCI(\omega_1))$. Thus, $\rE(\mymatrix{B}_\SCI(\omega_1))$ does not tightly circumscribe $\rV^*$.
	
	Conversely, assume that there exists $\myvect{x} \ne 0$ such that $g(\myvect{x}) = \myvect{x}^\intercal\mymatrix{H}_\SCI(\omega_1)\myvect{x}$ and consider another ellipsoid $\rE(\mymatrix{B})$ such that $\rV^* \subseteq \rE(\mymatrix{B}) \subseteq \rE(\mymatrix{B}_\SCI(\omega_1))$. We can chose $\rE(\mymatrix{B})$ tight and Theorem~\ref{the: Tight inclusion} states that there exists $\omega_2 \in [0,1]$ such that $\mymatrix{B} = \mymatrix{B}_\SCI(\omega_2)$. Let us prove that $\omega_1 = \omega_2$. By assumption $g(\myvect{x})=\myvect{x}^\intercal \mymatrix{H}_\SCI(\omega_1)\myvect{x}$ and $\myvect{x}^\intercal \mymatrix{H}_\SCI(\omega_1)\myvect{x} \le \myvect{x}^\intercal \mymatrix{H}_\SCI(\omega_2)\myvect{x} \le g(\myvect{x})$, so $g(\myvect{x})=\myvect{x}^\intercal \mymatrix{H}_\SCI(\omega_2)\myvect{x}$. As the function $h : \omega \mapsto \myvect{x}^\intercal \mymatrix{H}_\SCI(\omega)\myvect{x}$ is strictly concave on $[0,1]$ by Lemma~\ref{lem: Concavity}, it reaches its maximum exactly once on $[0,1]$. Furthermore, as SCI is conservative, $\forall \omega \in[0,1]$, $h(\omega) \le g(\myvect{x}) = h(\omega_1) = h(\omega_2)$. Thus, $\omega_1 = \omega_2$, and $\rE(\mymatrix{B}_\SCI(\omega_1))$ tightly circumscribes $\rV^*$.
\end{proof}

\section{Proof of the main result}\label{sec: Proof}

Thanks to Theorem~\ref{the: Tight inclusion}, we are now in a position to prove our main result.

\begin{proof}[Proof of Theorem~\ref{the: Main result}]
	Let $\omega^* \in \arg\min_{\omega \in [0,1]} J(\mymatrix{B}_\SCI(\omega))$. Let us prove that for any conservative bound $\mymatrix{B}_F$, $J(\mymatrix{B}_\SCI(\omega^*)) \le J(\mymatrix{B}_F)$.
	
	Let a pair $(\mymatrix{K}, \mymatrix{B}_F)$ define a conservative fusion. According to Lemma~\ref{lem: Minumal volume}, the ellipsoid $\rE(\mymatrix{B}_F)$ contains $\rV^*$.
	If the inclusion is tight, according to Theorem~\ref{the: Tight inclusion}, there exists $\omega_1 \in [0,1]$ such that $\mymatrix{B}_\SCI(\omega_1) = \mymatrix{B}_F$. In this case, $J(\mymatrix{B}_F) = J(\mymatrix{B}_\SCI(\omega_1))$. If the inclusion is not tight, there exists a smaller ellipsoid which circumscribes tightly $\rV^*$ and then there exists $\omega_1 \in [0,1]$ such that $\mymatrix{B}_\SCI(\omega_1) \preceq \mymatrix{B}_F$ and $\mymatrix{B}_\SCI(\omega_1) \ne \mymatrix{B}_F$. In this case, since $J$ is increasing, $J(\mymatrix{B}_\SCI(\omega_1)) < J(\mymatrix{B}_F)$. In both cases as $\omega^* \in \arg\min_{\omega \in [0,1]} J(\mymatrix{B}_\SCI(\omega))$, $J(\mymatrix{B}_\SCI(\omega^*)) \le J(\mymatrix{B}_F)$. Hence $\mymatrix{B}_\SCI(\omega^*)$ reaches the minimum.
	
	Furthermore, $(\mymatrix{K}, \mymatrix{B}_F)$ is also solution of Problem~\ref{pro: Main problem} if and only if $J(\mymatrix{B}_\SCI(\omega^*)) = J(\mymatrix{B}_\SCI(\omega_1)) = J(\mymatrix{B}_F)$. By definition, the first equality is equivalent to $\omega_1 \in \arg\min_{\omega \in [0,1]} J(\mymatrix{B}_\SCI(\omega))$, and as $\mymatrix{B}_\SCI(\omega_1) \preceq \mymatrix{B}_F$, the second equality is equivalent to $\mymatrix{B}_\SCI(\omega_1) = \mymatrix{B}_F$.
\end{proof}

\section{Discussion}\label{sec: Discussion}

The most important implication of Theorem~\ref{the: Main result} is the drastic simplification of Problem~\ref{pro: Main problem}. It can be reformulated as:
\begin{prob}[New Optimal Fusion with Split Covariances]
	\begin{equation*}
		\left\{\begin{array}{cll}
			\minimize\limits_{\omega} & J(\mymatrix{B}_\SCI(\omega)) \\
			\subject{} & 0 \le \omega \le 1
		\end{array}\right.
	\end{equation*}
	where $\mymatrix{B}_\SCI(\omega)$ is given by \eqref{eq: SCI equations}.
\end{prob}
As a consequence, instead of optimizing for $O(n^2)$ unknowns, there is now only one unknown lying on a segment. Such optimization becomes trivial for modern solvers. Even a linear search would be efficient. In addition, the function $\omega \mapsto J(\mymatrix{B}_\SCI(\omega))$ has been proven to be convex if the cost function is the determinant \cite{li2021woptimization}, making the optimization process even faster. It is also convex for the trace as stated in the following lemma.
\begin{lem}\label{lem: Convexity Trace}
	The function $\omega \mapsto \trace(\mymatrix{B}_\SCI(\omega))$ is convex on $[0,1]$.
\end{lem}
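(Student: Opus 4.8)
The plan is to prove the slightly stronger statement that $f(\omega) := \trace(\mymatrix{B}_\SCI(\omega))$ has a strictly positive second derivative on $[0,1]$, which forces convexity. Throughout I abbreviate $\mymatrix{H} = \mymatrix{H}_\SCI(\omega)$, so that $\mymatrix{B}_\SCI(\omega) = \mymatrix{H}^{-1}$ and $f(\omega) = \trace(\mymatrix{H}^{-1})$. First I would record that $\mymatrix{H}$, $\mymatrix{H}'$ and $\mymatrix{H}''$ are all symmetric — the closed forms for $\mymatrix{H}_\SCI'$ and $\mymatrix{H}_\SCI''$ given just before Lemma~\ref{lem: Concavity} are manifestly symmetric, being sums of terms of the shape $\mymatrix{A}(\omega)\mymatrix{P}_A\mymatrix{A}(\omega)$ — and that $\mymatrix{H} \succ 0$, hence $\mymatrix{H}^{-1}, \mymatrix{H}^{-2} \succ 0$.

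Next I would differentiate twice using the standard rule $\tfrac{d}{d\omega}\mymatrix{H}^{-1} = -\mymatrix{H}^{-1}\mymatrix{H}'\mymatrix{H}^{-1}$. One differentiation and a cyclic permutation give $f'(\omega) = -\trace(\mymatrix{H}^{-2}\mymatrix{H}')$; applying the product rule to $\mymatrix{H}^{-2}\mymatrix{H}'$, together with the inverse rule on $\mymatrix{H}^{-2}$, and collecting the two cross-terms (which are equal under the trace by a single cyclic permutation), yields
\begin{equation*}
	f''(\omega) = 2\,\trace\!\left(\mymatrix{H}^{-2}\mymatrix{H}'\mymatrix{H}^{-1}\mymatrix{H}'\right) - \trace\!\left(\mymatrix{H}^{-2}\mymatrix{H}''\right).
\end{equation*}

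It then remains to check that both contributions have the right sign. For the second term, Lemma~\ref{lem: Concavity} gives $\mymatrix{H}'' \prec 0$, so $-\mymatrix{H}'' \succ 0$; since $\mymatrix{H}^{-2} \succ 0$ as well, $-\trace(\mymatrix{H}^{-2}\mymatrix{H}'') = \trace(\mymatrix{H}^{-2}(-\mymatrix{H}'')) > 0$, using that $\trace(\mymatrix{M}\mymatrix{N}) = \trace(\mymatrix{M}^{1/2}\mymatrix{N}\mymatrix{M}^{1/2}) > 0$ whenever $\mymatrix{M},\mymatrix{N} \succ 0$. For the first term, set $\mymatrix{G} = \mymatrix{H}^{-1} \succ 0$ and $\mymatrix{W} = \mymatrix{G}^{1/2}\mymatrix{H}'\mymatrix{G}^{1/2}$, which is symmetric because $\mymatrix{H}'$ is. A cyclic rearrangement then gives
\begin{equation*}
	\trace\!\left(\mymatrix{H}^{-2}\mymatrix{H}'\mymatrix{H}^{-1}\mymatrix{H}'\right) = \trace\!\left(\mymatrix{G}\,\mymatrix{H}'\,\mymatrix{G}\,\mymatrix{H}'\,\mymatrix{G}\right) = \trace\!\left(\mymatrix{G}^{1/2}\mymatrix{W}^2\mymatrix{G}^{1/2}\right) \ge 0,
\end{equation*}
since $\mymatrix{G}^{1/2}\mymatrix{W}^2\mymatrix{G}^{1/2}$ is a congruence transform of the positive semi-definite matrix $\mymatrix{W}^2$. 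Combining the two bounds yields $f''(\omega) > 0$ on all of $[0,1]$ (the endpoints cause no trouble, as Lemma~\ref{lem: Concavity} and positive definiteness hold there too), which establishes convexity, indeed strict convexity.

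The individual calculations are routine once organized, so the only delicate points are bookkeeping ones: verifying that the two cross-terms from the product rule are genuinely identical under the trace, and tracking the overall sign so that the concavity of $\mymatrix{H}_\SCI$ supplied by Lemma~\ref{lem: Concavity} enters with the correct orientation. The one structural fact that must be stated explicitly is the symmetry of $\mymatrix{H}'$, which is what makes $\mymatrix{W}$ symmetric and hence $\mymatrix{W}^2 \succeq 0$; this is immediate from the closed form of $\mymatrix{H}_\SCI'$.
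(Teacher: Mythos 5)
Your proof is correct and follows essentially the same route as the paper: differentiate twice using $\tfrac{d}{d\omega}\mymatrix{H}^{-1} = -\mymatrix{H}^{-1}\mymatrix{H}'\mymatrix{H}^{-1}$, obtain a quadratic term in $\mymatrix{H}'$ that is nonnegative by symmetry plus a term controlled by Lemma~\ref{lem: Concavity}, and conclude $f''(\omega) > 0$. The only cosmetic difference is that the paper proves the stronger matrix statement $\mymatrix{B}_\SCI''(\omega) \succ 0$ and then takes the trace, whereas you work directly at the scalar level — your displayed expression for $f''(\omega)$ is exactly the trace of the paper's formula for $\mymatrix{B}_\SCI''(\omega)$.
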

For low dimensions, $n \le 4$, \cite{reinhardt2012closed} proposes closed-formed solutions for the parameters $\omega$ minimizing the trace and the determinant of the CI bounds. These solutions may be adapted with SCI to speed-up even more the optimization.

Geometrically, the optimality of the SCI fusion have been proved using the tightness over the minimal set $\rV^*$. Therefore, the optimality of the SCI fusion does not depend on the (increasing) cost function. However, similarly to the CI fusion, the optimal bound generally depends on the cost function: \eg{} optimizing the trace or the determinant results generally in different optimal bounds. A notable difference with CI is that, except in the trivial cases where $\mymatrix{C}_A \preceq \mymatrix{C}_B$ or $\mymatrix{C}_B \preceq \mymatrix{C}_A$, all CI bounds tightly circumscribe the minimal volume \cite{kahan1968circumscribing} (in the case of CI, it is the intersection of the ellipsoids $\rE(\mymatrix{C}_A)$ and $\rE(\mymatrix{C}_B)$). This implies that all CI bounds reach the minimal bound for some cost function. For SCI, this is not the case as stated in Theorem~\ref{the: Condition for tightness}. For example, if $\mymatrix{P}_A = \mymatrix{P}_B = \mymatrix{Q}_A = \mymatrix{Q}_B = \mymatrix{I}$, it can be proved (it is not detailed here) that there is only one tight SCI bound: for $\omega = 1/2$. In this case, $\mymatrix{B}_\SCI(1/2)$ is the minimal bound for all increasing cost functions.

SCI provides the optimal fusion bound when the estimators are split into a correlated (but to an unknown degree) component and an uncorrelated component. Therefore, by considering the limit cases in which one of these two components is null, we rediscover two well-known optimal fusions. If there is no uncorrelated component, \ie{} if $\mymatrix{Q}_A = \mymatrix{Q}_B = \mymatrix{0}$, then the SCI equations \eqref{eq: SCI equations} become the CI equations \eqref{eq: CI equations}, and the admissible set $\rA_{\text{Split}}$ becomes $\bar\rA$. CI has indeed been proven to provide the optimal bound for the set $\bar\rA$ \cite{reinhardt2015minimum}; in fact, this was the original motivation of this work. On the other hand, if the correlated components are null, \ie{} if $\mymatrix{P}_A = \mymatrix{P}_B = \mymatrix{0}$, the only admissible cross-covariance is $\mymatrix{P}_{AB} = \mymatrix{0}$ and the admissible set $\rA$ is a singleton. In this case, the SCI equations \eqref{eq: SCI equations} becomes the well-known Information Filter (IF):
\begin{subequations}\label{eq: Information fusion equations}
	\begin{align}
		\mymatrix{C}_{\text{IF}}^{-1} \myrandvect{\hat x}_{\text{IF}} &= \mymatrix{C}_A^{-1} \myrandvect{\hat x}_A + \mymatrix{C}_B^{-1} \myrandvect{\hat x}_B,\\
		\mymatrix{C}_{\text{IF}}^{-1} &= \mymatrix{C}_A^{-1} + \mymatrix{C}_B^{-1}.
 	\end{align}
\end{subequations}
This corresponds to the application of the Bar-Shalom-Campo formula with $\mymatrix{C}_{AB} = \mymatrix{0}$. The bound $\mymatrix{C}_{\text{IF}}$ is actually the \emph{true} covariance of the error of the fused estimator. SCI is therefore a more general optimal fusion rule than both CI and the information filter.

\begin{figure}
	\centering
	{\input{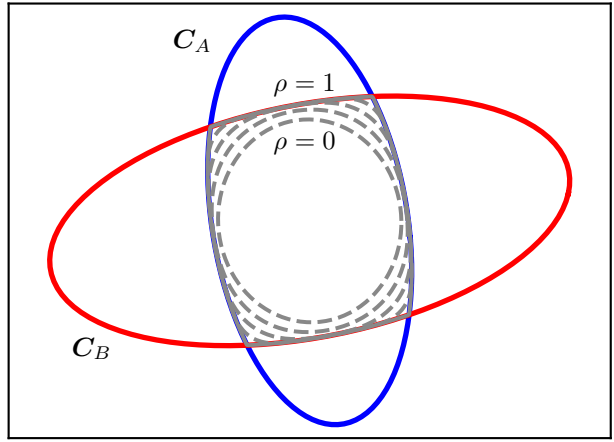}}
	\caption{Evolution of the set $\rV^*$ as function of the maximal admissible correlation coefficient $\rho$. The dashed curves represent the border of the set $\rV^*$ obtained for different correlation coefficients. The matrices $\mymatrix{C}_A$ and $\mymatrix{C}_B$ have been set to the same values as in Figure~\ref{fig: all CI and SCI}, and the correlation coefficients to $\rho = 0.25i$ for $i\in \set{0, \dots, 4}$. In this context: $\mymatrix{P}_A = \rho\mymatrix{C}_A$, $\mymatrix{P}_B = \rho\mymatrix{C}_B$, $\mymatrix{Q}_A = (1-\rho)\mymatrix{C}_A$, and $\mymatrix{Q}_B = (1-\rho)\mymatrix{C}_B$.}
	\label{fig: Evolution of rA}
\end{figure}

Since SCI has been proved to be optimal, it can be applied to the problem of partially correlated estimates treated in \cite{wu2017covariance}. This problem focuses on the case where the estimators are known to have a \emph{bounded} correlation, in the sense that the maximum eigenvalue of their correlation matrix is bounded by some parameter $\rho \in [0,1]$. The authors of \cite{wu2017covariance} proved that this assumption is equivalent to consider the set of admissible cross-covariances $\rA_{\rho}$ defined in \eqref{eq: Bounded correlation set}. The set $\rA_{\rho}$ is a particular case of $\rA_{\text{Split}}$ where the matrices are set to $\mymatrix{P}_A = \rho\mymatrix{C}_A$, $\mymatrix{P}_B = \rho\mymatrix{C}_B$, $\mymatrix{Q}_A = (1-\rho)\mymatrix{C}_A$ and $\mymatrix{Q}_B = (1-\rho)\mymatrix{C}_B$. Therefore, Theorem~\ref{the: Main result} states that the optimal fusion is of the form (after simplification):
\begin{subequations}\label{eq: Particular form Wu}
	\begin{align}
		\mymatrix{K}_A &= \frac{\omega}{\rho + \omega(1-\rho)} \mymatrix{B}_F\mymatrix{C}_A^{-1}, \\
		\mymatrix{K}_B &= \frac{\bar\omega}{\rho + \bar\omega(1-\rho)} \mymatrix{B}_F \mymatrix{C}_B^{-1}, \\
		\mymatrix{B}_F^{-1} &= \frac{\omega}{\rho + \omega(1-\rho)}\mymatrix{C}_A^{-1} + \frac{\bar\omega}{\rho + \bar\omega(1-\rho)}\mymatrix{C}_B^{-1},
	\end{align}
\end{subequations}
with $\omega \in [0,1]$. By defining $\gamma = \frac{1-\omega}{\omega}$, or equivalently letting $\omega = \frac{1}{1 + \gamma}$, \eqref{eq: Particular form Wu} becomes:
 \begin{subequations}\label{eq: Particular form Wu (2)}
 	\begin{align}
 		\mymatrix{K}_A &= (1+\gamma\rho)^{-1} \mymatrix{B}_F\mymatrix{C}_A^{-1}, \\
 		\mymatrix{K}_B &= (1+\gamma^{-1}\rho)^{-1} \mymatrix{B}_F \mymatrix{C}_B^{-1}, \\
 		\mymatrix{B}_F^{-1} &= (1+\gamma\rho)^{-1}\mymatrix{C}_A^{-1} + (1+\gamma^{-1}\rho)^{-1}\mymatrix{C}_B^{-1}.
 	\end{align}
 \end{subequations}
The bounds defined by \eqref{eq: Particular form Wu (2)} was proposed in \cite{wu2017covariance} and proved to provide the minimal bound when considering the trace as a cost function. Theorem~\ref{the: Main result} now adds that \eqref{eq: Particular form Wu (2)} also provides the minimal bound when considering any increasing cost function.
Figure~\ref{fig: Evolution of rA} presents the evolution of the minimum volume $\rV^*$ as the maximum correlation $\rho$ ranges from $0$ to $1$. As it can been seen, if $\rho = 0$, then $\rV^*$ is an ellipse: it is $\rE(\mymatrix{C}_{\text{IF}})$. On the other hand if $\rho = 1$, then $\rV^*$ corresponds to the intersection of $\rE(\mymatrix{C}_A)$ and $\rE(\mymatrix{C}_B)$ as in CI. As the maximum correlation $\rho$ increases, the set $\rV^*$ also increases what illustrates why the information filter gives smaller bound than SCI and than CI: the less information, the greater the minimal volume $\rV^*$, and the larger the bound.

\section{Conclusion}\label{sec: Conclusion}

The SCI fusion has been proven to reach the minimum covariance bound for the fusion of two estimators having split covariances. SCI can be adapted to any number of estimators by extending the convex combination of precision matrices. In this case, it still defines a conservative fusion \cite{julier2001general}. Unfortunately, it has been demonstrated, in \eg{} \cite{reinhardt2015minimum, ajgl2018fusion}, that CI is not optimal for more than two estimators. The same arguments also apply to SCI. 
As a consequence, SCI and CI are both suboptimal when applied with sequential inputs. The definition of recursive filters, such as the well-known Kalman filter, is therefore guaranteed to provided suboptimal bounds. Future work should focus on the research of the optimal fusion for more than two estimators. This challenging task is related to the problem of circumscribing an ellipsoid to the intersection of several ellipsoids. This problem is open since at least the work of Kahan \cite{kahan1968circumscribing}.

\appendix
\section{Proofs of the lemmas}\label{ap: Proofs of the lemmas}
\subsection{Proof of Lemma~\ref{lem: Optimal gain for a cross-covariance}}

Consider an unbiased fusion, \ie{} let $\mymatrix{K}_B = \mymatrix{I} - \mymatrix{K}_A$. Then, from \eqref{eq: MSE}:
\begin{multline*}
	\mymatrix{C}_F(\mymatrix{K}, \mymatrix{P}_{AB}) - \mymatrix{C}_F^*(\mymatrix{P}_{AB}) =\\
	\left(\mymatrix{K}_A - (\mymatrix{C}_B - \mymatrix{P}_{AB}^\intercal)\mymatrix{R}^{-1}\right)
	\\
	\times \mymatrix{R} \left(\mymatrix{K}_A^\intercal - \mymatrix{R}^{-1}(\mymatrix{C}_B - \mymatrix{P}_{AB})\right) \succeq \mymatrix{0}.
\end{multline*}

\subsection{Proof of Lemma~\ref{lem: Concavity}}
First, note that for every positive definite matrices $\mymatrix{P}$ and $\mymatrix{Q}$: $\mymatrix{P}(\mymatrix{P} + \mymatrix{Q})^{-1}\mymatrix{Q} = (\mymatrix{P}^{-1} + \mymatrix{Q}^{-1})^{-1} \succ \mymatrix{0}$. The result then follows by applying this result with $\mymatrix{P} = \mymatrix{P}_A$ and $\mymatrix{Q} = \omega \mymatrix{Q}_A$, and $\mymatrix{P} = \mymatrix{P}_B$ and $\mymatrix{Q} = \bar\omega \mymatrix{Q}_B$.

\subsection{Proof of Lemma~\ref{lem: Correlation matrices}}

Consider $\mymatrix{\Omega}$ such that $\mymatrix{\Omega}^\intercal\mymatrix{\Omega} \preceq \mymatrix{I}$ and let $\mymatrix{P}_{AB} = \mymatrix{P}_A^{1/2} \mymatrix{\Omega} \mymatrix{P}_B^{1/2}$. Let us prove that $\mymatrix{P}_{AB} \in \rA_{\text{Split}}$. By construction:
\begin{equation*}
	\begin{bmatrix}
		\mymatrix{P}_A & \mymatrix{P}_{AB} \\
		\mymatrix{P}_{AB}^\intercal & \mymatrix{P}_B
	\end{bmatrix}
	= \begin{bmatrix}
		\mymatrix{P}_A^{1/2} & \mymatrix{0} \\
		\mymatrix{0} & \mymatrix{P}_B^{1/2}
	\end{bmatrix}
	\begin{bmatrix}
		\mymatrix{I} & \mymatrix{\Omega} \\
		\mymatrix{\Omega}^\intercal & \mymatrix{I}
	\end{bmatrix}
	\begin{bmatrix}
		\mymatrix{P}_A^{1/2} & \mymatrix{0} \\
		\mymatrix{0} & \mymatrix{P}_B^{1/2}
	\end{bmatrix}.
\end{equation*}
As $\mymatrix{\Omega}^\intercal\mymatrix{\Omega} \preceq \mymatrix{I}$, the matrix $\begin{bmatrix}
	\mymatrix{I} & \mymatrix{\Omega} \\
	\mymatrix{\Omega}^\intercal & \mymatrix{I}
\end{bmatrix}$ is positive semi-definite, see \eg{} \cite[Lemma 7.7.6]{horn2012matrix}, and thus $\mymatrix{P}_{AB} \in \rA_{\text{Split}}$.

\subsection{Proof of Lemma~\ref{lem: (Pr) Main lemma}}

Let us first prove that $(\mymatrix{H}_\SCI(\omega_0) - \mymatrix{H})\myvect{z} = \myvect{0}$.
By definition of $\myvect{z}$ and $\mymatrix{M}_F^*(\mymatrix{P}_{AB}^*)$:
\begin{align*}
	\myvect{z}^\intercal (\mymatrix{H} - \mymatrix{M}_F^*(\mymatrix{P}_{AB}^*)) \myvect{z} &= 0, \\
	\myvect{z}^\intercal (\mymatrix{H}_\SCI(\omega_0) - \mymatrix{M}_F^*(\mymatrix{P}_{AB}^*)) \myvect{z} &= 0.
\end{align*}
As both $\mymatrix{B}$ and $\mymatrix{B}_\SCI(\omega_0)$ are conservative, $\mymatrix{H} - \mymatrix{M}_F^*(\mymatrix{P}_{AB}^*) \preceq \mymatrix{0} $ and $\mymatrix{H}_\SCI(\omega_0) - \mymatrix{M}_F^*(\mymatrix{P}_{AB}^*) \preceq \mymatrix{0}$. Therefore $\mymatrix{H}\myvect{z} = \mymatrix{M}_F^*(\mymatrix{P}_{AB}^*)\myvect{z} = \mymatrix{H}_\SCI(\omega_0)\myvect{z}$. Hence $(\mymatrix{H}_\SCI(\omega_0) - \mymatrix{H})\myvect{z} = \myvect{0}$.

Assume now that for some $\eta \in \R$ and some $\lambda \ne 0$, the vector $\myvect{x} = \eta \myvect{z} + \lambda \myvect{y}$ satisfies $g(\myvect{x}) = \myvect{x}^\intercal\mymatrix{H}_\SCI(\omega_0)\myvect{x}$. As $\mymatrix{H}$ is conservative, $g(\myvect{x}) \ge \myvect{x}^\intercal\mymatrix{H}\myvect{x}$. Expanding $\myvect{x}^\intercal (\mymatrix{H}_\SCI(\omega_0) - \mymatrix{H})\myvect{x}$ gives:
\begin{equation*}
	0 \le \myvect{x}^\intercal (\mymatrix{H}_\SCI(\omega_0) - \mymatrix{H})\myvect{x} = \lambda^2\myvect{y}^\intercal (\mymatrix{H}_\SCI(\omega_0) - \mymatrix{H})\myvect{y}.
\end{equation*}
Thus, $\myvect{y}^\intercal (\mymatrix{H}_\SCI(\omega_0) - \mymatrix{H})\myvect{y} \ge 0$.

Similarly, if $g(\myvect{x}(\lambda)) = \myvect{x}(\lambda)^\intercal\mymatrix{H}_\SCI(\omega_0)\myvect{x}(\lambda) + o(\lambda^2)$, then:
\begin{equation*}
	\myvect{y}^\intercal(\mymatrix{H}_\SCI(\omega_0) - \mymatrix{H})\myvect{y} = \frac{g(\myvect{x}(\lambda)) - \myvect{x}(\lambda)^\intercal\mymatrix{H}\myvect{x}(\lambda)}{\lambda^2} + o(1).
\end{equation*}
As $\forall \lambda$, $g(\myvect{x}(\lambda)) - \myvect{x}(\lambda)^\intercal\mymatrix{H}\myvect{x}(\lambda) \ge 0$:
\begin{equation*}
	\myvect{y}^\intercal(\mymatrix{H}_\SCI(\omega_0) - \mymatrix{H})\myvect{y} \ge o(1).
\end{equation*}
Thus, as the left-hand side is constant, by considering the limit when $\lambda$ goes to $0$:
$\myvect{y}^\intercal (\mymatrix{H}_\SCI(\omega_0) - \mymatrix{H})\myvect{y} \ge 0$ as claimed.

\subsection{Proof of Lemma~\ref{lem: Convexity Trace}}

Recall that we note $\mymatrix{H}_\SCI(\omega) = \mymatrix{B}_\SCI(\omega)^{-1}$. Let $f : \omega \mapsto \trace(\mymatrix{B}_\SCI(\omega))$.

The second derivative of $\omega \mapsto \mymatrix{B}_\SCI(\omega)$ is:
\begin{multline*}
	\mymatrix{B}_\SCI''(\omega) = 2 \mymatrix{B}_\SCI(\omega) \mymatrix{H}_\SCI'(\omega) \mymatrix{B}_\SCI(\omega) \mymatrix{H}_\SCI'(\omega) \mymatrix{B}_\SCI(\omega) \\
	- \mymatrix{B}_\SCI(\omega) \mymatrix{H}_\SCI''(\omega)\mymatrix{B}_\SCI(\omega).
\end{multline*}
As $\forall \omega \in [0,1]$, $\mymatrix{B}_\SCI(\omega)$ is positive definite and $\mymatrix{H}_\SCI''(\omega)$ is negative definite by Lemma~\ref{lem: Concavity}, $\mymatrix{B}_\SCI''(\omega)$ is positive definite.

Thus, on $[0,1]$ the second derivative of $f$ satisfies:
\begin{equation*}
	f''(\omega) = \trace(\mymatrix{B}_\SCI''(\omega)) > 0.
\end{equation*}
Hence, $f$ is convex.


\bibliographystyle{plain} 
\bibliography{references}


\end{document}